\theoremstyle{plain}
\newtheorem{theorem}{Theorem}[section]
\newtheorem{proposition}[theorem]{Proposition}
\newtheorem{lemma}[theorem]{Lemma}
\newtheorem{fact}[theorem]{Fact}
\theoremstyle{definition}
\newtheorem{remark}[theorem]{Remark}
\newtheorem{example}[theorem]{Example}
\numberwithin{equation}{section}
\newcommand{\dd}{\,\mathrm{d}}
\newcommand{\ii}{\ts\mathrm{i}}
\newcommand{\ee}{\,\mathrm{e}}
\newcommand{\ts}{\hspace{0.5pt}}
\newcommand{\nts}{\hspace{-0.5pt}}
\DeclareMathOperator{\card}{\mathrm{card}}
\DeclareMathOperator{\dist}{\mathrm{dist}}
\newcommand{\cD}{\mathcal{D}}
\newcommand{\cO}{\mathcal{O}}
\newcommand{\cP}{\mathcal{P}}
\newcommand{\scO}{{\scriptstyle\mathcal{O}}}
\newcommand{\ZZ}{\mathbb{Z}}
\newcommand{\RR}{\mathbb{R}\ts}
\newcommand{\CC}{\mathbb{C}\ts}
\newcommand{\NN}{\mathbb{N}}
\newcommand{\Nnull}{\NN^{}_{0}}
\newcommand{\XX}{\mathbb{X}}
\newcommand{\exend}{\hfill $\Diamond$}
\newcommand{\myfrac}[2]{\frac{\raisebox{-2pt}{$#1$}}
      {\raisebox{0.5pt}{$#2$}}}
\begin{document}

\title[Averaging Almost Periodic Functions]{Averaging Almost
  Periodic Functions\\[2mm] along Exponential Sequences}

\author{Michael Baake}
\address{Fakult\"{a}t f\"{u}r Mathematik,
  Universit\"{a}t Bielefeld,\newline \hspace*{\parindent}Postfach
  100131, 33501 Bielefeld, Germany}
\email{mbaake@math.uni-bielefeld.de}

\author{Alan  Haynes}
\address{Department of Mathematics, University of Houston, \newline
\hspace*{\parindent}3551 Cullen Blvd., Houston, TX 77204-3008, USA}
\email{haynes@math.uh.edu}

\author{Daniel Lenz}
\address{Fakult\"{a}t f\"{u}r Mathematik, Universit\"{a}t Jena, \newline
\hspace*{\parindent}Ernst-Abbe-Platz 2, 07743 Jena, Germany}
\email{daniel.lenz@uni-jena.de}
  
\begin{abstract}
  The goal of this expository article is a fairly self-contained
  account of some averaging processes of functions along sequences of
  the form $(\alpha^n x)^{}_{n\in\NN}$, where $\alpha$ is a fixed real
  number with $\lvert \alpha \rvert > 1$ and $x\in\RR$ is arbitrary.
  Such sequences appear in a multitude of situations including the
  spectral theory of inflation systems in aperiodic order. Due to the
  connection with uniform distribution theory, the results will mostly
  be \emph{metric} in nature, which means that they hold for
  \mbox{Lebesgue{\ts}}-almost every $x\in\RR$.
\end{abstract}

\maketitle
\thispagestyle{empty}

\section[Introduction]{Introduction}

A frequently encountered problem in mathematics and its applications
is the study of averages of the form
$\frac{1}{N} \sum_{n=1}^{N} f(x^{}_{n})$, where $f$ is a function with
values in $\CC$ or, more generally, in some Banach space, and
$(x^{}_{n})^{}_{n\in\NN}$ is a sequence of numbers in the domain of
$f$. Quite often, an exact treatment of these averages is out of hand,
and one resorts to the analysis of asymptotic properties for large
$N$.  This, for instance, is common in analytic number theory; compare
\cite{B-Hardy,B-Harman,B-Apostol} and references therein. Equally
important is the case where one can establish the existence of a limit
as $N\to\infty$, and then calculate it. This occupies a good deal of
ergodic theory, where Birkhoff's theorem and Kingman's subadditive
theorem provide powerful tools to tackle the problem; see
\cite{B-CFS,B-Walters} for background.

However, not all tractable cases present themselves in a way that is
immediately accessible to tools from ergodic theory. Also, depending on the
nature of the underlying problem, one might prefer a more elementary
method, as Birkhoff-type theorems already represent a fairly
advanced kind of `weaponry'. An interesting (and certainly not
completely independent) approach is provided by the theory of uniform
distribution of sequences, which essentially goes back to Weyl
\cite{B-Weyl} and has emerged as a major tool for the study of
function averages, in particular for functions that are periodic or
defined on a compact domain; see \cite{B-KN,B-DT,B-LP} and references
therein for more.

In this contribution, we recall some of these concepts, with an eye on
both methods (uniform distribution and ergodic theory), and use the
tools to treat averages of almost periodic functions along sequences
where this makes sense, in particular along sequences of the form
$(\alpha^{n} x)^{}_{n\in\NN}$ with `generic' $x\in\RR$ and a fixed
number $\alpha\in\RR$ with $\lvert \alpha \rvert > 1$. The first
subtlety that we shall encounter here emerges when $\alpha$ is not an
integer, which requires some care for functions that fail to be
locally Riemann-integrable. The second subtlety occurs when we extend
our considerations to almost periodic functions.

While the latter extension represents a relatively simple step beyond
periodic functions as long as one retains almost periodicity in the
sense of Bohr, matters become more involved when singularities occur
or weaker notions of almost periodicity are needed. Below, we shall
discuss some extensions of this kind that are relevant in practice;
compare \cite{B-FSS} for some related results. Let us note that some
of the notions and concepts used below are studied in much greater
generality in \cite{B-MS,B-NS}.

Before we begin our exposition, let us mention that averages of
$1$-periodic functions are often just the first step in the study of
Riesz--Raikov sums, that is, sums of the form
$\sum_{k=0}^{n-1} f( \alpha^{k} t)$.  Kac's
investigation for $\alpha=2$ in \cite{B-Kac} and Takahashi's refined
and generalised analysis \cite{B-Tak} are early examples that consider
limits (in a law of large numbers scaling) as well as distributions
(in a central limit theorem scaling, when
$\int_{0}^{1} f(t) \dd t = 0$). This led to a more elaborate
derivation of central limit theorems for Riesz--Raikov sums along
exponential sequences; compare \cite{B-Pet,B-Les,B-Rio} and references
therein.

Below, we are mainly interested in the Birkhoff-type averages, with a
focus on functions that fail to be periodic, but still have some
repetitivity structure in the form of a suitable almost periodicity.
In this sense, we have selected one particular aspect of Riesz--Raikov
sums that appears in the theory of aperiodic order
\cite{B-TAO,B-Sol,B-BG}.

\section{Preliminaries and general setting}

As far as possible, we follow the general (and fairly standard)
notation from \cite[Ch.~1]{B-TAO}, wherefore only deviations or
extensions will be mentioned explicitly. In particular, we will use
the Landau symbols $\cO$ and $\scO$ for the standard asymptotic
behaviour of real- or complex-valued functions; compare
\cite{B-Apostol,B-Hardy} for definitions and examples.

When two sets $A,B \subseteq \RR$ are given, we denote their
\emph{Minkowski sum} as
\[
   A+B \, := \, \{ a + b : a \in A , \, b \in B \} \ts .
\]
In particular, if the
point set $S \subset \RR$ is locally finite and $\varepsilon > 0$, we
use $S+(-\varepsilon,\varepsilon)$ for the open subset of $\RR$ that
emerges from $S$ as $\bigcup_{x\in S} (x-\varepsilon, x+\varepsilon)$.
Note that its complement in $\RR$ is then a closed set (possibly
empty).

Below, we frequently talk about results of metric nature, where
Lebesgue measure $\lambda$ on $\RR$ is our reference measure.
When a statement is true for almost every $x\in\RR$ with respect to
Lebesgue measure, we will simply say that it holds for a.e.\
$x\in\RR$. Likewise, when we speak of a null set, we mean a 
null set with respect to Lebesgue measure.

Recall that a sequence $(x_{n})^{}_{n\in\NN}$ of real numbers is
called \emph{uniformly distributed modulo} $1$ if, for all real
numbers $a,b$ with $0 \leqslant a < b \leqslant 1$, we have
\[
   \lim_{N\to\infty} \ts \myfrac{1}{N} \ts
   \card \Bigl( [a,b) \cap \big\{ 
   \langle x^{}_{1} \rangle , \ldots , \langle x^{}_{\nts N} \rangle 
   \big\} \Bigr)  \, = \: b-a  \ts ,
\]
where $\langle x \rangle$ denotes the fractional part\footnote{Since
  we use $\{ x \}$ for singleton sets, we resort to the less
  common notation $\langle x \rangle$ for the fractional part
  of $x$ in order to avoid misunderstandings.} of $x\in\RR$.  We refer
to \cite{B-KN,B-Bugeaud} for general background.  Recall that a
function $f$ on $\RR$ is $1$-\emph{periodic} if $f(x+1) = f(x)$ holds
for all $x\in\RR$.  One fundamental result, due to Weyl \cite{B-Weyl},
can now be formulated as follows; see also \cite[Thm.~5.3]{B-Harman}.

\begin{lemma}[\textsf{Weyl's criterion}]\label{B-lem:Weyl}
  For a sequence\/ $(x_{n})^{}_{n\in\NN}$ of real numbers,
  the following properties are equivalent.
\begin{enumerate}\itemsep=2pt
\item The sequence is uniformly
     distributed modulo\/ $1$.
\item For every complex-valued, $1\nts$-periodic continuous function\/
  $f\nts$, one has
\[
   \lim_{N\to\infty} \myfrac{1}{N}
   \sum_{n=1}^{N} f (x_{n}) \, =  \int_{0}^{1}
    \! f(x) \dd x \ts .
\]
\item The relation from\/ $(2)$ holds for every\/
 $1\nts$-periodic function that is locally Riemann-integrable.
\item The relation\vspace{-2mm}
\[
    \lim_{N\to\infty} \myfrac{1}{N}
   \sum_{n=1}^{N} \ee^{2 \pi \ii k \ts x_{\nts n}} 
    \, = \, \delta^{}_{k,0}
\]
holds for every\/ $k\in\ZZ$.  \qed
\end{enumerate}
\end{lemma}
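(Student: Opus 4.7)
The plan is to establish the four-way equivalence as a cycle, running $(3) \Rightarrow (2) \Rightarrow (4) \Rightarrow (2) \Rightarrow (1) \Rightarrow (3)$. Two of these links are essentially free: a continuous $1$-periodic function is locally Riemann-integrable, which gives $(3) \Rightarrow (2)$, and each exponential $x \mapsto \ee^{2\pi\ii k x}$ is continuous, $1$-periodic, and satisfies $\int_{0}^{1} \ee^{2\pi\ii k x} \dd x = \delta^{}_{k,0}$, which gives $(2) \Rightarrow (4)$.

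The implications $(2) \Rightarrow (1)$ and $(1) \Rightarrow (3)$ I handle by sandwich arguments. For $(2) \Rightarrow (1)$, given $0 \leqslant a < b \leqslant 1$ and $\varepsilon > 0$, I construct continuous $1$-periodic trapezoidal functions $g^{}_{\varepsilon}, h^{}_{\varepsilon}$ with $g^{}_{\varepsilon} \leqslant \one^{}_{[a,b)} \leqslant h^{}_{\varepsilon}$ on $[0,1)$ and $\int_{0}^{1} (h^{}_{\varepsilon} - g^{}_{\varepsilon}) \dd x < \varepsilon$, then apply (2) to $g^{}_{\varepsilon}$ and $h^{}_{\varepsilon}$ and let $\varepsilon \to 0$. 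For $(1) \Rightarrow (3)$, I use the very definition of Riemann integrability to sandwich a $1$-periodic, locally Riemann-integrable $f$ on $[0,1)$ between step functions whose integrals differ by at most $\varepsilon$; since step functions are finite linear combinations of indicators of half-open intervals, (1) combined with linearity produces the desired limit for each step function, and squeezing then gives it for $f$. Complex-valued $f$ is reduced to the real case by splitting into real and imaginary parts.

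The substantive step is $(4) \Rightarrow (2)$, and this is where an external input enters: the Weierstrass trigonometric approximation theorem (equivalently Fej\'{e}r's theorem on uniform Ces\`{a}ro convergence of Fourier series, or Stone--Weierstrass applied to $C(\TT)$). It gives, for every continuous $1$-periodic $f\colon\RR\to\CC$ and every $\varepsilon > 0$, a trigonometric polynomial $p^{}_{\varepsilon}(x) = \sum^{}_{\lvert k \rvert \leqslant M} c^{}_{k} \ee^{2\pi\ii k x}$ with $\lVert f - p^{}_{\varepsilon} \rVert^{}_{\infty} < \varepsilon$. By linearity, hypothesis (4) yields $\myfrac{1}{N} \sum_{n=1}^{N} p^{}_{\varepsilon}(x^{}_{n}) \to c^{}_{0} = \int_{0}^{1} p^{}_{\varepsilon}(x) \dd x$ as $N\to\infty$, while the uniform error bound controls both $\bigl\lvert \myfrac{1}{N} \sum_{n=1}^{N}(f - p^{}_{\varepsilon})(x^{}_{n}) \bigr\rvert$ and $\bigl\lvert \int_{0}^{1}(f - p^{}_{\varepsilon})(x)\dd x \bigr\rvert$ by $\varepsilon$, uniformly in $N$. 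Taking $N \to \infty$ first and then $\varepsilon \to 0$ delivers (2).

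The main obstacle is precisely $(4) \Rightarrow (2)$: the other links are bookkeeping via linearity and squeezing, but passing from the purely oscillatory hypothesis on exponentials to a statement about all continuous periodic observables genuinely requires a density theorem for trigonometric polynomials in $C(\TT)$.
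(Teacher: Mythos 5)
Your proof is correct. Note that the paper does not actually prove this lemma: it states it with a \qed and refers to Weyl's original article and to Harman's \emph{Metric Number Theory} for the argument, so there is no in-paper proof to compare against. What you have written is the standard textbook derivation, and your chain of implications ($(3)\Rightarrow(2)$ and $(2)\Rightarrow(4)$ trivially; $(4)\Rightarrow(2)$ via Fej\'{e}r/Weierstrass and a $3\varepsilon$-argument; $(2)\Rightarrow(1)$ and $(1)\Rightarrow(3)$ by trapezoidal and step-function sandwiching respectively) does close up to yield the full four-way equivalence, since $(1),(2),(3)$ form a cycle and $(4)$ is linked to $(2)$ in both directions. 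You also correctly identify $(4)\Rightarrow(2)$ as the only step requiring genuine input, namely density of trigonometric polynomials in $C(\TT)$. The only points worth tightening in a full write-up are the routine ones: in $(2)\Rightarrow(1)$ the trapezoids must be built on the circle $\RR/\ZZ$ (so that their periodic extensions remain continuous when $a=0$ or $b=1$, or when the enlarged support of $h^{}_{\varepsilon}$ wraps around), and in $(1)\Rightarrow(3)$ one should recall that Riemann integrability presupposes boundedness, so the Darboux step-function sandwich exists.
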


Let us note in passing that the equivalence of conditions (1) and (2)
can also be understood in terms of systems of almost invariant
integrals and Eberlein's ergodic theorem. These notions are reviewed
and studied in some detail in \cite{B-NS}.

\begin{remark}
  Weyl's criterion (which is also known as
  Weyl's lemma) is an important tool for calculating the average of a
  locally Riemann-integrable periodic function along a uniformly
  distributed sequence.  In fact, a $1$-periodic function is locally
  Riemann-integrable if and only if the Birkhoff average converges for
  \emph{every} sequence that is uniformly distributed modulo $1$;
  compare \cite{B-dBP} as well as \cite[p.~123]{B-Harman}.
  
  Conversely, the integral of a Riemann-integrable function can be
  approximated by averages along uniformly distributed sequences.
  This is a standard method in numerical integration, in particular
  for higher-dimensional integrals; see \cite{B-Hart,B-LP} and
  references therein for more.  \exend
\end{remark}

There is an abundance of known results on uniformly distributed
sequences and their finer properties; we refer to \cite{B-KN} for the
classic theory and to \cite{B-Bugeaud} and references therein for more
recent developments. Here, we are particularly interested in one
specific class of sequences, for which the uniform distribution is
well known; compare \cite[Thms.~1.7 and 1.10]{B-Bugeaud} as well as
\cite[Sec.~7.3, Thm.~1]{B-CFS} or \cite[Cor.~1.4.3 and
  Exs.~1.4.3]{B-KN}.

\begin{fact}\label{B-fact:power-dist}
  Consider the sequence\/ $( \alpha^{n} x )^{}_{n\in\NN}$.  For
  fixed\/ $\alpha\in\RR$ with\/ $\lvert \alpha \rvert > 1$, it is
  uniformly distributed modulo\/ 
  $1$ for a.e.\/ $x\in\RR$.  For
  fixed\/ $0 \ne x \in \RR$, the sequence is uniformly distributed modulo\/ $1$
  for a.e.\/ $\alpha\in\RR$ with\/ $\lvert \alpha \rvert > 1$.  \qed
\end{fact}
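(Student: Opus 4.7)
The plan is to verify condition~(4) of Weyl's criterion (Lemma~\ref{B-lem:Weyl}) for every fixed nonzero $k\in\ZZ$, abbreviating
\[
   S^{}_{N}(x,\alpha) \, := \, \myfrac{1}{N} \sum_{n=1}^{N}
   \ee^{2\pi\ii k \ts\alpha^{n} x} \ts .
\]
Since a countable union of null sets is null, it suffices in both parts to establish $S^{}_{N}\to 0$ almost everywhere on each bounded subinterval $I$ of the relevant variable, and then to take the countable union over $k\in\ZZ\setminus\{0\}$ and over an exhaustion of the parameter domain by bounded intervals.

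For part~(1), I would fix $\alpha$ with $\lvert\alpha\rvert>1$ and compute the $L^{2}$-norm of $S^{}_{N}(\cdot\ts,\alpha)$ over a bounded $I\subset\RR$. Expanding the square and integrating termwise yields a diagonal contribution of order $\lvert I\rvert/N$ and off-diagonal integrals bounded in absolute value by $C/\lvert\alpha^{n}-\alpha^{m}\rvert$ for $n\ne m$, with $C=C(k,I)$. The geometric separation $\lvert\alpha^{n}-\alpha^{m}\rvert\geqslant \lvert\alpha\rvert^{\min(n,m)}(\lvert\alpha\rvert^{\lvert n-m\rvert}-1)$ makes the double sum $\sum_{n\ne m\leqslant N}\lvert\alpha^{n}-\alpha^{m}\rvert^{-1}$ of order $\cO(N)$, so altogether $\int_{I}\lvert S^{}_{N}\rvert^{2}\dd x=\cO(1/N)$. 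Chebyshev's inequality together with the Borel--Cantelli lemma along the sparse subsequence $N^{}_{j}=j^{2}$ then give $S^{}_{N^{}_{j}}\to 0$ a.e.\ on $I$, and the passage to the full sequence is standard, using $\lvert S^{}_{N}\rvert\leqslant 1$ and $(N^{}_{j+1}-N^{}_{j})/N^{}_{j}\to 0$.

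For part~(2), I would fix $0\ne x\in\RR$ and repeat the $L^{2}$-calculation over a bounded closed $I\subset(1,\infty)$ (the case $I\subset(-\infty,-1)$ being analogous). The off-diagonal integrals are now $\int_{I}\ee^{2\pi\ii kx(\alpha^{n}-\alpha^{m})}\dd\alpha$, and the plan is to bound them by integration by parts. For $n>m$ and $\alpha>1$, the phase derivative $2\pi kx(n\alpha^{n-1}-m\alpha^{m-1})$ is monotone on $(1,\infty)$ (its own derivative has constant sign there) and satisfies $n\alpha^{n-1}-m\alpha^{m-1}\geqslant(n-m)\ts a^{n-1}$ on $I$, where $a:=\inf I>1$. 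This yields a per-pair bound of order $\cO\bigl((n-m)^{-1} a^{-(n-1)}\bigr)$ and a total off-diagonal contribution of order $\cO(1/N^{2})$, so again $\int_{I}\lvert S^{}_{N}\rvert^{2}\dd\alpha=\cO(1/N)$. The subsequence Borel--Cantelli argument then concludes as in part~(1).

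The main technical point is to secure the off-diagonal estimates with constants independent of $n$ and $m$ so that the double sum does not accumulate; this is delivered by the geometric separation of $\alpha^{n}$ and $\alpha^{m}$ in part~(1) and by the monotonicity of the phase derivative on $(1,\infty)$ in part~(2). Everything else, including the filling-in between the subsequence $(N^{}_{j})$ and the full sequence, is routine once the $L^{2}$-bound $\cO(1/N)$ is in hand.
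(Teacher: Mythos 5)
Your proof is correct, and it is essentially the classical argument underlying the references the paper cites for this Fact (the paper itself gives no proof beyond those citations): the second-moment bound $\int_{I}\lvert S^{}_{N}\rvert^{2}=\cO(1/N)$, Cantelli along $N^{}_{j}=j^{2}$, and the filling-in step using $\lvert S^{}_{N}\rvert\leqslant 1$ constitute precisely the Weyl--Koksma metric method behind \cite[Ch.~1, Thms.~4.1 and 4.3]{B-KN}. The two key off-diagonal estimates are both sound as stated: the geometric separation $\lvert\alpha^{n}-\alpha^{m}\rvert\geqslant\lvert\alpha\rvert^{\min(n,m)}(\lvert\alpha\rvert^{\lvert n-m\rvert}-1)$ in part~(1), and in part~(2) the monotonicity of the phase derivative (its second derivative $2\pi kx\bigl(n(n-1)\alpha^{n-2}-m(m-1)\alpha^{m-2}\bigr)$ indeed has constant sign on $(1,\infty)$ for $n>m$) together with the lower bound $(n-m)\ts a^{n-1}$, which make the van der Corput first-derivative test applicable and the double sum convergent.
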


Below, we will mainly be concerned with the first case, where a fixed
$\alpha$ with $\alpha>1$ or $\lvert \alpha \rvert > 1$ is given. This
situation is of particular interest in the theory of aperiodic
order, for instance in connection with the
renormalisation analysis of inflation tiling systems, because it plays
an important role for the averaging of functions with certain
repetition properties along the real line.

Let us mention in passing that, when $\alpha = q \geqslant 2$ is an integer,
$(q^{n} x)^{}_{n\in\NN}$ is uniformly distributed modulo\/ $1$ if and
only if $x$ is a \emph{normal} number
\cite{B-Bugeaud} in base $q$, which means that the $q$-ary expansion
of $x$ contains all possible finite substrings in the digit set
$\{ 0, 1, \ldots, q-1\}$ in such a way that any substring of length
$\ell$ has frequency $1/q^{\ell}$. In the Lebesgue sense, a.e.\
$x\in\RR$ is normal with respect to all integer bases
\cite[Thm.~4.8]{B-Bugeaud}, but it is a hard problem to decide on
normality for any given number.

\begin{remark}\label{B-rem:gen-u}
  Consider a sequence $(u^{}_{n})^{}_{n\in\NN_{0}}$ of real numbers
  such that $\inf_{n\ne m} \,\lvert u^{}_{n} - u^{}_{m} \rvert >
  0$. Then, by \cite[Cor.~1.4.3]{B-KN}, the sequence
  $(u^{}_{n} x)^{}_{n\in\NN_{0}}$ is uniformly distributed modulo $1$
  for a.e.\ $x\in\RR$. In fact, it is a rather direct consequence
  that, for any $k \in \NN$, $\ell\in\NN_{0}$ and any real number
  $L>0$, the arithmetic progression sequence
  $(u^{}_{k m + \ell} \, x)^{}_{m\in\NN}$ is uniformly distributed
  modulo $L$ for a.e.\ $x\in\RR$. This is the \emph{total Bohr
    ergodicity} of  the sequence $(u^{}_{n})^{}_{n\in\NN_{0}}$ as introduced in
  \cite[Def.~2.1]{B-FSS}.  Clearly, $u^{}_{n} = \alpha^{n}$ with
  $\lvert \alpha \rvert > 1$ defines such a sequence, while no bounded
  sequence can have this property.  \exend
\end{remark}

As soon as we leave the realm of periodic functions that are locally
Riemann-integrable, the desired averaging statements will need some
finer properties of our sequences $(\alpha^n x)^{}_{n\in\NN}$, where
we assume $\lvert \alpha \rvert > 1$ as before. In particular, we will
need details on the uniform distribution (or the deviation from it)
and some information on the approximation or non-approximation of
numbers in a given set by the sequence elements.  For the first issue,
we need the {discrepancy} structure of the sequence, and some
{Diophantine approximation} properties for the latter.

Recall that the \emph{discrepancy} of a sequence
$(x_{n})^{}_{n\in\NN}$ is quantified in terms of the first $N$
elements of the sequence (taken modulo $1$), namely by the number
\[
  \cD^{}_{\nts N} \, := \, \sup_{0\leqslant a < b \leqslant 1}
  \left| \myfrac{1}{N} \ts \card \Bigl( [a,b) \cap
  \big\{ \langle x^{}_{1} \rangle, \ldots , \langle x^{}_{\nts N} \rangle 
  \big\}  \Bigr) - (b-a) \right| \ts ,
\]
together with its asymptotic properties as $N \to \infty$. 

In this terminology, the sequence $(x_{n})^{}_{n\in\NN}$ is uniformly
distributed modulo $1$ if and only if
$\lim_{N\to\infty} \cD^{}_{\nts N} = 0$; see
\cite[Thm.~1.12]{B-Bugeaud}.  The discrepancy of sequences is a huge
research area in itself; see \cite{B-DT} and references therein for a
comprehensive exposition. We will need the following result, where we
refer to \cite[Thm.~5.13]{B-Harman} and \cite{B-KHN} for proofs.

\begin{fact}\label{B-fact:discrep}
  Let\/ $\alpha \in \RR$ with\/ $\lvert \alpha \rvert > 1$ be
  given. Then, for any fixed\/ $\varepsilon>0$, the discrepancy of the
  sequence\/ $(\alpha^{n} x)^{}_{n\in\NN}$, for a.e.\/ $x\in\RR$, 
   asymptotically is
\[
    \cD^{}_{\nts N} \, = \, \cO \left(
    \frac{\bigl(\log (N)\bigr)^{\frac{3}{2} + \varepsilon}}
    {\sqrt{N \ts }}  \right)\vspace{-2mm}
\]
   as\/ $N\to\infty$.  \qed
\end{fact}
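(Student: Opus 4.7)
The plan is to bound the discrepancy via the \emph{Erd\H{o}s--Tur\'an inequality} and then to control the resulting exponential sums with an $L^{2}$ orthogonality estimate combined with a G\'al--Koksma type maximal argument.

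Since the conclusion is local in $x$, I would first fix a bounded interval $I \subset \RR$ and prove the stated bound for a.e.\ $x \in I$. Write $S^{}_{N}(h, x) := \sum_{n=1}^{N} \ee^{2\pi \ii h \alpha^{n} x}$. The Erd\H{o}s--Tur\'an inequality then supplies, for every integer $H \geqslant 1$, an absolute constant $C > 0$ with
\[
   \cD^{}_{N} \, \leqslant \, C \left( \myfrac{1}{H} +
   \myfrac{1}{N} \sum_{h=1}^{H} \myfrac{\lvert S^{}_{N}(h, x) \rvert}{h} \right) ,
\]
so matters reduce to a uniform-in-$h$ metric bound on $\lvert S^{}_{N}(h, x) \rvert$.

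For the $L^{2}$ estimate, I would expand $\int_{I} \lvert S^{}_{N}(h, x) \rvert^{2} \dd x$ into a diagonal part of size $N \lvert I \rvert$ plus off-diagonal terms whose integrals are bounded by $(\pi h \lvert \alpha^{n} - \alpha^{m} \rvert)^{-1}$. The lacunarity $\lvert \alpha \rvert > 1$ makes $\sum_{n \ne m} \lvert \alpha^{n} - \alpha^{m} \rvert^{-1}$ a convergent double sum, so the integral is at most $C' N$ with $C'$ independent of $h$. Applied to the family $f^{}_{n}(x) = \ee^{2\pi \ii h \alpha^{n} x}$, the G\'al--Koksma theorem then yields, for every $\varepsilon > 0$ and each fixed $h \in \NN$, the a.e.\ bound $\lvert S^{}_{N}(h, x) \rvert = \cO \bigl( \sqrt{N\,}\, (\log N)^{3/2 + \varepsilon} \bigr)$, with the implicit constant possibly depending on $h$ and $x$.

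The main technical step, which I expect to be the hardest part, is to upgrade this into a bound that is uniform in $h$: the exceptional null set must be independent of $h$, and the constant may grow at most mildly (say, at most like a power of $\log h$) with $h$. This is handled by a dyadic Chebyshev plus Borel--Cantelli argument, leveraging that the $L^{2}$ bound $C'N$ is independent of $h$. Once such a uniform estimate is in hand, it is substituted into the Erd\H{o}s--Tur\'an inequality and $H$ is optimised: taking $H = H(N) \asymp \sqrt{N\,}$ up to logarithmic factors, both $1/H$ and $\frac{1}{N} \sum_{h \leqslant H} \lvert S^{}_{N}(h, x) \rvert/h$ are simultaneously of order $(\log N)^{3/2 + \varepsilon}/\sqrt{N\,}$, yielding the claimed discrepancy estimate for a.e.\ $x \in I$, hence for a.e.\ $x \in \RR$.
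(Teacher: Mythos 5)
The paper offers no proof of this Fact; it is imported from \cite{B-Harman} and \cite{B-KHN}, and your overall architecture --- Erd\H{o}s--Tur\'an inequality, lacunary $L^{2}$ orthogonality, G\'al--Koksma --- is indeed the one used there. The individual ingredients are sound: the bound $\int_{I}\lvert S^{}_{N}(h,x)\rvert^{2}\dd x\leqslant C'N$ with $C'$ independent of $h$ (and of the starting index of the block, which G\'al--Koksma also requires) does follow from the convergence of $\sum_{n\ne m}\lvert\alpha^{n}-\alpha^{m}\rvert^{-1}$, and the exceptional set being ``independent of $h$'' is not actually an obstacle, since a countable union of null sets is null; only the growth of the constant in $h$ matters.

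The genuine gap is in the final optimisation: the route you describe cannot produce the exponent $\tfrac{3}{2}+\varepsilon$. Even in the best case of your uniformisation step, namely $\lvert S^{}_{N}(h,x)\rvert\leqslant C(x)\ts\sqrt{N\ts}\,(\log N)^{3/2+\varepsilon}$ for all $h\leqslant H$, the Erd\H{o}s--Tur\'an sum contributes
\[
   \myfrac{1}{N}\sum_{h=1}^{H}\myfrac{\lvert S^{}_{N}(h,x)\rvert}{h}
   \,\leqslant\, C(x)\,\frac{(\log N)^{3/2+\varepsilon}}{\sqrt{N\ts}}
   \sum_{h=1}^{H}\myfrac{1}{h}
   \,\asymp\,\frac{(\log N)^{5/2+\varepsilon}}{\sqrt{N\ts}}
\]
for $H\asymp\sqrt{N\ts}$, a full power of $\log N$ more than claimed; letting the constant grow like a power of $\log h$, as you propose, only makes this worse. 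The cited proofs avoid the loss by running the maximal (Rademacher--Menshov plus Borel--Cantelli) argument \emph{once}, on a single quantity that already majorises $N\cD^{}_{\nts N}$ and whose block variance is $\cO(N)$ --- for instance on $\sum_{n}\bigl(\one_{[a,b)}-(b-a)\bigr)(\langle\alpha^{n}x\rangle)$, where Parseval gives $\sum_{h}\lvert c^{}_{h}\rvert^{2}\leqslant 1$ for the Fourier coefficients of the centred indicator, so no harmonic sum over $h$ ever enters --- rather than once per frequency followed by an $\ell^{1}$ summation. The price of doing so is that the $L^{2}$ estimate must then control cross terms $\lvert h\alpha^{n}-h'\alpha^{m}\rvert^{-1}$ with $h\ne h'$, whose near-collisions for non-integer $\alpha$ are the real technical core of \cite{B-KHN} and are entirely absent from your plan. (That said, the $(\log N)^{5/2+\varepsilon}/\sqrt{N\ts}$ bound your argument does yield would suffice for every use of this Fact in the present paper, since the discrepancy only ever enters multiplied by $V^{}_{\! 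N}$, where any fixed power of $\log N$ against a positive power of $N$ is harmless.)
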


Next, we need a \emph{Diophantine approximation} property.  If
$\varnothing \ne Y\subset\RR$ is a uniformly discrete point set,
compare \cite[Sec.~2.1]{B-TAO}, we can define
\[
   \dist (x, Y) \, := \, \min_{y\in Y}\, \lvert x - y \rvert 
\]
as the distance of $x \in \RR$ from $Y$. Now, one can state 
the following metric `non-approximation' result, which is a
versatile generalisation of the classic situation with 
$Y\! =\ZZ$.

\begin{lemma}\label{B-lem:Dio}
  Let\/ $\alpha \in \RR$ with\/ $\lvert \alpha \rvert > 1$ be given,
  and let\/ $Y\nts\nts\subset \RR$ be a non-empty, uniformly discrete
  point set. Further, fix some\/ $\varepsilon > 0$.  Then, for a.e.\/
  $x\in\RR$, the inequality
\[
    \dist (\alpha^{n-1} x, Y) \, \geqslant \,
    \myfrac{1}{n^{1+\varepsilon}}
\]
  holds for almost all\/ $n\in\NN$, by which we mean that it
  holds for all natural numbers except at most finitely many.
\end{lemma}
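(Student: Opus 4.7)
The plan is to apply the Borel--Cantelli lemma to the exceptional events. For each $n\in\NN$, define
\[
  B^{}_{n} \, := \, \bigl\{ x\in\RR :
  \dist ( \alpha^{n-1} x, Y) < n^{-(1+\varepsilon)} \bigr\} \, = \,
  \alpha^{-(n-1)} \bigl( Y + ( - n^{-(1+\varepsilon)},
  n^{-(1+\varepsilon)} ) \bigr) \ts ,
\]
so the claim to be proved is $\lambda \bigl( \limsup_{n\to\infty} B^{}_{n} \bigr) = 0$.
Since $\RR = \bigcup_{k\in\NN} [-k,k]$ is a countable union, it suffices
to show, for every bounded interval $I\subset\RR$, that
$\sum_{n\geqslant 1} \lambda (B^{}_{n} \cap I) < \infty$,
and then invoke the Borel--Cantelli lemma.

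The core step is a clean estimate of $\lambda (B^{}_{n} \cap I)$ that
uses the uniform discreteness of $Y$. Let $\eta > 0$ be a packing
radius for $Y$, meaning that $\lvert y - y' \rvert \geqslant \eta$ for
all distinct $y,y'\in Y$. Then any bounded interval $J\subset \RR$
contains at most $\lambda (J)/\eta + 1$ points of $Y$. Changing
variables via $y = \alpha^{n-1} x$ and applying this bound with
$J = \alpha^{n-1} I$, one gets
\[
  \lambda (B^{}_{n} \cap I) \, \leqslant \,
  \lvert \alpha \rvert^{-(n-1)} \cdot
  \Bigl( \myfrac{\lvert \alpha \rvert^{n-1} \lambda (I)}{\eta} + 1 \Bigr)
  \cdot \myfrac{2}{n^{1+\varepsilon}} \, = \,
  \myfrac{2\ts \lambda(I)}{\eta\ts n^{1+\varepsilon}} \, + \,
  \myfrac{2\ts \lvert \alpha \rvert^{-(n-1)}}{n^{1+\varepsilon}} \ts .
\]

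Both summands are summable in $n$: the first because $\varepsilon > 0$
gives $\sum_{n} n^{-(1+\varepsilon)} < \infty$, and the second because
$\lvert \alpha \rvert > 1$ makes the contribution geometrically
small. Therefore $\sum_{n} \lambda (B^{}_{n} \cap I) < \infty$, and
Borel--Cantelli yields $\lambda(\limsup_{n} B^{}_{n} \cap I) = 0$ for
each bounded $I$, hence for all of $\RR$. Writing the conclusion out
gives precisely the statement: for a.e.\ $x\in\RR$, the inequality
$\dist (\alpha^{n-1} x, Y) \geqslant n^{-(1+\varepsilon)}$ holds for
all but finitely many $n$.

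I do not expect any real obstacle here; the only point that requires
care is the double counting of contributions near the boundary of
$\alpha^{n-1} I$, which is why the $+1$ in the point-count bound is
needed and ultimately produces the second (harmless) term. The
exponential expansion by $\lvert \alpha \rvert^{n-1}$ is compensated
exactly by the Jacobian $\lvert \alpha \rvert^{-(n-1)}$ of the
substitution, so that the effective scale of the neighbourhood is
$n^{-(1+\varepsilon)}$, and convergence of $\sum n^{-(1+\varepsilon)}$
drives the whole argument.
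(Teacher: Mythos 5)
Your proof is correct and follows essentially the same route as the paper's: localise to bounded intervals, rescale by $\alpha^{n-1}$, count the points of $Y$ in the dilated interval via uniform discreteness to get $\lambda(B_{n}\cap I)=\cO(n^{-(1+\varepsilon)})$, and conclude with (the easy half of) Borel--Cantelli. The only cosmetic difference is that the paper uses the unit intervals $[m,m+1]$ rather than general bounded intervals.
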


\begin{proof}
The statement is trivial when $Y$ is a finite point set, so let us
assume that $Y$ is unbounded. In this case, one still has
\[
     \delta \, := \, \inf \big\{ \lvert x - y \rvert :
     x,y \in Y \ts , \; x \ne y \big\} \, > \, 0 \ts ,
\]
due to the assumed uniform discreteness of $Y$. Consequently,
the number of points of $Y$ in an arbitrary interval $[a,b]$
with $a \leqslant b$ satisfies
\begin{equation}\label{B-eq:card-bound}
   \card \bigl( Y \cap [a,b] \bigr) \, \leqslant \,
   1 + \Bigl[ \myfrac{b-a}{\delta} \Bigr] ,
\end{equation}
where $[\ts . \ts ]$ is the Gau{\ss} bracket.

Let $m\in\ZZ$ be arbitrary, but fixed, and consider
$I_{m} = [\ts m, m \! + \! 1 ]$.  With
$\RR = \bigcup_{m\in\ZZ} I_{m}$, it suffices to show that our claim
fails at most for a null set within the interval $I_{m}$, as the
countable union of null sets is still a null set.

Choose $\varepsilon >0$ and, for $n\in\NN$, consider the set
\[
     A^{(m)}_{n} \, := \, \Big\{ x \in I_{m} : \dist 
    (\alpha^{n-1}x,Y) < \myfrac{1}{n^{1+\varepsilon}} \Big\} .
\]
It is clearly measurable, and its measure, since
$\lvert \alpha \rvert > 1$, can be estimated as
\[
\begin{split}
   \lambda \bigl( A^{(m)}_{n} \bigr) \, & = \,
   \myfrac{1}{\lvert\alpha\rvert^{n-1}} \, \lambda 
   \Big\{ z \in \alpha^{n-1} I_{m} : \dist (z, Y) <
   \myfrac{1}{n^{1+\varepsilon}} \Big\} \\[1mm]
   & \leqslant \, \myfrac{1}{\lvert\alpha\rvert^{n-1}} 
   \, \myfrac{2}{n^{1+\varepsilon}} 
   \left( 1 + \Bigl[\myfrac{\lvert\alpha\rvert^{n-1}}
           {\delta} \Bigr] \right)
   \, = \: \cO \Bigl( \myfrac{1}{n^{1+\varepsilon}} \Bigr),
\end{split}
\]
where the second step is a consequence of Eq.~\eqref{B-eq:card-bound}.
We thus know that there is a $C>0$ such that
$\lambda \bigl( A^{(m)}_{n} \bigr) \leqslant C/n^{1+\varepsilon}$ for all
$n\in\NN$.  

Now, we have
\[
    0 \, \leqslant \, \sum_{n\geqslant 1} \lambda 
   \bigl( A^{(m)}_{n} \bigr) \, \leqslant \, 
   C \ts \sum_{n\geqslant 1} \myfrac{ 1}{n^{1+\varepsilon}} \ts ,
\]
where the second sum is convergent, and thus also the first.
Then, Cantelli's lemma\footnote{This being the `easy half' of the
  Borel--Cantelli lemma, which goes back to Cantelli, we follow
  \cite[App.~C]{B-Bugeaud} in our terminology, and also refer
  to this reference for a proof.} tells us that
\[
    E^{(m)}_{\infty} \, := \, \bigl\{ x \in I_{m} :
    x \in A^{(m)}_{n} \text{ for infinitely many } 
    n \in \NN \bigr\}
\]
is indeed a null set, which is what we needed to show.
\end{proof}

\begin{remark}
  Though immaterial for the proof, it is often useful in an
  application to also remove all $x\in\RR$ with
  $Y\cap \{ \alpha^{n-1} x : n\in\NN\} \ne \varnothing$, which constitutes a
  null set because it is clearly countable or even finite.  \exend
\end{remark}

The lower bound in Lemma~\ref{B-lem:Dio} can be replaced by the values
of a more general, non-negative arithmetic function, $\psi (n)$ say,
provided one also has the summability condition $\sum_{n\in\NN} \psi(n) <
\infty$. When this sum diverges, the situation changes. Indeed, for
instance if $\alpha = 2$ and $Y=\ZZ$, there is then a set $X \subset
\RR$ of full measure such that, for $x\in X$, the distance of $2^{n-1}
x$ from the nearest integer is smaller than $\frac{1}{n}$ for
infinitely many $n\in\NN$; see \cite{B-BaHa} for a more general result
in this direction.  Moreover, one cannot do better than using some
$\varepsilon>0$ in Lemma~\ref{B-lem:Dio}, in line with the divergence
of the harmonic series.

\section{Averaging periodic functions}

Let us first state a result that emerges from an application of
Weyl's criterion to the special type of sequences we are
interested in.

\begin{fact}\label{B-fact:per-sample}
  Let\/ $ f \! : \, \RR \xrightarrow{\quad} \CC$ be a continuous or,
  more generally, a locally Riemann-integrable function that is\/ 
  $L$-periodic, so\/ $f(x+L) = f(x)$ holds for some fixed\/ $L>0$ 
  and all\/ $x\in\RR$. If\/ $\alpha$ is a real number with\/
  $\lvert \alpha \rvert > 1$, one has
\[
    \lim_{N\to\infty} \myfrac{1}{N} \sum_{n=0}^{N-1} f(\alpha^{n} x)
    \, = \, \myfrac{1}{L} \int_{0}^{L} \! f(y) \dd y\vspace{-1mm}
\]
  for a.e.\/ $x\in\RR$.
\end{fact}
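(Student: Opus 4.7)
The plan is to reduce the statement to the $1$-periodic case already covered by Weyl's criterion (Lemma~\ref{B-lem:Weyl}) and then invoke Fact~\ref{B-fact:power-dist}.

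First I would rescale. Given $L > 0$, set $g(y) := f(Ly)$. Then $g$ is $1$-periodic, since $g(y+1) = f(Ly + L) = f(Ly) = g(y)$, and $g$ is locally Riemann-integrable because $f$ is (local Riemann-integrability is preserved under an affine change of variables). With $x' := x/L$, we have $f(\alpha^{n} x) = g(\alpha^{n} x/L) = g(\alpha^{n} x')$, and the map $x \mapsto x/L$ preserves Lebesgue null sets. It therefore suffices to prove the statement for $L = 1$ and the sequence $(\alpha^{n} x')^{}_{n \in \NN^{}_{0}}$.

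Second, I would apply Fact~\ref{B-fact:power-dist}: for $\lvert \alpha \rvert > 1$, the sequence $(\alpha^{n} x')^{}_{n\in\NN}$ is uniformly distributed modulo~$1$ for a.e.\ $x' \in \RR$. For every such $x'$, Weyl's criterion in the form of the equivalence $(1) \Leftrightarrow (3)$ of Lemma~\ref{B-lem:Weyl} yields
\[
    \lim_{N\to\infty} \myfrac{1}{N} \sum_{n=1}^{N} g (\alpha^{n} x')
    \, = \int_{0}^{1} \! g(y) \dd y \ts .
\]
Since $g$ is $1$-periodic and locally Riemann-integrable, $g$ is bounded, so replacing the summation range $\{1, \dots, N\}$ by $\{0, \dots, N-1\}$ changes the average by $\cO(1/N)$ and does not affect the limit. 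Reverting the substitution via $u = Ly$ gives $\int_{0}^{1} g(y) \dd y = \frac{1}{L} \int_{0}^{L} f(u) \dd u$, which is the asserted limit.

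There is no genuine obstacle here; the statement is essentially a corollary of Fact~\ref{B-fact:power-dist} combined with Lemma~\ref{B-lem:Weyl}(3). The only minor points requiring care are the rescaling from $L$-periodic to $1$-periodic (and the observation that this preserves both local Riemann-integrability and null sets in $x$), and the harmless shift in summation index from $\{1, \dots, N\}$ to $\{0, \dots, N-1\}$, which is absorbed by the boundedness of $g$.
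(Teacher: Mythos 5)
Your proposal is correct and follows essentially the same route as the paper: rescale $f$ to the $1$-periodic function $g(y)=f(Ly)$, invoke Fact~\ref{B-fact:power-dist} for the a.e.\ uniform distribution of $(\alpha^{n}x/L)$ modulo $1$, and conclude via Weyl's criterion (Lemma~\ref{B-lem:Weyl}). The only additions — noting that the null set is preserved under $x\mapsto x/L$ and that the index shift from $\{1,\dots,N\}$ to $\{0,\dots,N-1\}$ is harmless — are correct and merely make explicit what the paper leaves implicit.
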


\begin{proof}
  Since any $L$-periodic continuous function is also locally
  Riemann-integrable, it suffices to consider the latter class.
  Define a new function $g$ by $g(x) := f(L\ts x)$, which clearly is
  $1$-periodic and locally Riemann-integrable. Now, we have
\[
   \myfrac{1}{N} \sum_{n=0}^{N-1} f(\alpha^{n} x) \, = \,
   \myfrac{1}{N} \sum_{n=0}^{N-1} g \bigl(\alpha^{n} 
   \tfrac{x}{L} \bigr) ,
\]
where $\bigl(\alpha^{n} \tfrac{x}{L} \bigr)_{n\in\NN}$ is uniformly
distributed modulo $1$ for a.e.\ $\frac{x}{L}\in\RR$, and hence also
for a.e.\ $x\in\RR$, by Fact~\ref{B-fact:power-dist}. Consequently,
Weyl's criterion from Lemma~\ref{B-lem:Weyl}
tells us that
\[
   \myfrac{1}{N} \sum_{n=0}^{N-1} f(\alpha^{n} x)
   \; \xrightarrow{ \, N \to \infty \,} \,
   \int_{0}^{1} g (z) \dd z \, = \, 
   \myfrac{1}{L} \int_{0}^{L} \! f(y) \dd y
\]
holds for all such cases, which means for a.e.\ $x\in\RR$
as claimed.
\end{proof}

Note that one can rewrite Fact~\ref{B-fact:per-sample} with
the \emph{mean} of $f$, because
\[
   M (f) \: := \ts \lim_{T\to\infty} \myfrac{1}{2 \ts T}
   \int_{-T}^{T}  f(y) \dd y \: = \ts \lim_{T\to\infty} 
   \myfrac{1}{T}  \int_{a}^{a+T} \! \! f(y) \dd y
   \, = \, \myfrac{1}{L} \int_{0}^{L} \! f(y) \dd y
\]
holds for every $L$-periodic function that is locally
Riemann-integrable, where the limit clearly is uniform in $a \in \RR$.

\begin{example}\label{B-ex:trig-mono}
  Fix $k\in\RR$ and consider the trigonometric monomial defined by
  $\psi^{}_{k} (x) = \ee^{2 \pi \ii k x}$. Unless $k=0$, in which case
  $\psi^{}_{0} \equiv 1$, the function $\psi^{}_{k}$ has period
  $\frac{1}{\lvert k \rvert} > 0$. For $\alpha\in\RR$ with
  $\lvert \alpha \rvert >1$, Fact~\ref{B-fact:per-sample} implies that
\[
    \myfrac{1}{N} \sum_{n=0}^{N-1} \psi^{}_{k} (\alpha^n x)
    \; \xrightarrow{\, N \to \infty \,} \; M(\psi^{}_{k})
    \, = \, \begin{cases} 1 , &  k = 0, \\
        0, & \text{otherwise}, \end{cases}
\]
  holds for a.e.\ $x\in\RR$.
  
  More generally, if $(u^{}_{n})^{}_{n\in\NN_{0}}$ with
  $\inf_{n\ne m} \, \lvert u^{}_{n} - u^{}_{m} \rvert > 0$
  is the type of sequence from Remark~\ref{B-rem:gen-u},
  the above convergence statement also holds with 
  $\alpha^{n} x$ replaced by $u^{}_{n} \ts x$.
\exend
\end{example}

It is clear from the proof of Fact~\ref{B-fact:per-sample} that, for
periodic functions, it suffices to consider the case $L=1$ without
loss of generality, as we do from now on. Our next step shows that,
for $\alpha\in\ZZ$, one can go beyond the class of $1$-periodic
functions that are locally Riemann-integrable.

\begin{lemma}\label{B-lem:per-sample}
  Consider a function\/ $f \in L^{1}_{\mathrm{loc}} (\RR)$ that is\/
  $1$-periodic. Fix\/ $q\in\ZZ$ with\/ $\lvert q \rvert \geqslant 2$.
  Then, for a.e.\/ $x \in \RR$, one has
\[
    \myfrac{1}{N} \sum_{n=0}^{N-1} \! f (q^n x)
    \, \xrightarrow{\, N \to \infty \,} 
    \int_{0}^{1} \! f (y) \dd y \, = \, M (f) \ts .
\]
\end{lemma}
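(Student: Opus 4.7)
The key obstruction here is that Weyl's criterion (and hence the argument behind Fact~\ref{B-fact:per-sample}) needs local Riemann-integrability, whereas we now only have $f\in L^{1}_{\mathrm{loc}}(\RR)$. The plan is to exploit the fact that, precisely when $\alpha=q\in\ZZ$ with $\lvert q\rvert\geqslant 2$, the sequence $(q^{n}x\bmod 1)^{}_{n\in\Nnull}$ is generated by iterating a \emph{single} map on the circle, so the full weaponry of ergodic theory becomes available. This is exactly the case excluded for generic real $\alpha$, where $x\mapsto\alpha x\bmod 1$ is not a self-map of $[0,1)$.

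Concretely, identify $f$ with a function on $\TT=\RR/\ZZ$ (well defined as an $L^{1}$-class by $1$-periodicity) and introduce the multiplication map $T^{}_{\nts q}\!:\TT\to\TT$, $T^{}_{\nts q}(x)=qx\bmod 1$. First I would check that $T^{}_{\nts q}$ preserves Haar (Lebesgue) measure on $\TT$; this is immediate from the fact that $T^{}_{\nts q}$ is a $\lvert q\rvert$-to-one covering with constant Jacobian $\lvert q\rvert$. Then I would apply Birkhoff's ergodic theorem to conclude that, for a.e.\ $x\in\TT$,
\[
   \myfrac{1}{N}\sum_{n=0}^{N-1} f(q^{n}x)
   \; \xrightarrow{\,N\to\infty\,}\;
   \EE[\ts f\mid \cI\ts] (x) \ts ,
\]
where $\cI$ is the $\sigma$-algebra of $T^{}_{\nts q}$-invariant sets.

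The main work is then to show that $T^{}_{\nts q}$ is ergodic, so that the conditional expectation reduces to the constant $\int_{0}^{1}\! f(y)\dd y$. The cleanest route is Fourier-analytic: a $T^{}_{\nts q}$-invariant $h\in L^{2}(\TT)$ with Fourier series $\sum_{k\in\ZZ}c^{}_{k}\ee^{2\pi\ii k x}$ satisfies $h\circ T^{}_{\nts q}=h$, which after comparison of Fourier coefficients forces $c^{}_{k}=c^{}_{q k}$ for every $k\in\ZZ$. Iterating gives $c^{}_{k}=c^{}_{q^{j}k}$ for all $j\in\NN$; since $\lvert q\rvert\geqslant 2$, Bessel's inequality (equivalently, $c\in\ell^{2}(\ZZ)$) forces $c^{}_{k}=0$ for $k\neq 0$, so $h$ is constant a.e. The standard density argument then transfers ergodicity from $L^{2}$ to $L^{1}$, and we obtain $\EE[\ts f\mid\cI\ts]=\int_{0}^{1}\! f(y)\dd y$ a.e.

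Finally, I would lift the conclusion from $\TT$ back to $\RR$. Writing $\RR=\bigcup_{m\in\ZZ}[m,m+1)$ as a countable disjoint union, the $1$-periodicity of $f$ together with $q^{n}(x+m)-q^{n}x=q^{n}m\in\ZZ$ gives $f(q^{n}(x+m))=f(q^{n}x)$, so the a.e.\ statement on $[0,1)$ extends verbatim to each translate and hence, via the countable union of null sets, to a.e.\ $x\in\RR$. I expect the only subtle step to be the Fourier/ergodicity argument; everything else is bookkeeping.
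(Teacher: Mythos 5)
Your proposal is correct and follows essentially the same route as the paper: both view the average as a Birkhoff sum for the map $x \mapsto q\ts x \bmod 1$ on the circle, invoke invariance and ergodicity of Lebesgue measure, and apply Birkhoff's ergodic theorem. The only difference is that you spell out the Fourier-coefficient proof of ergodicity and the lift back to $\RR$, which the paper leaves to a citation and to the reader.
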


\begin{proof}
  Since $q\in\ZZ$, we may view the average as a Birkhoff sum for the
  dynamical system on $[0,1]$ defined by the mapping
  $x \mapsto q \ts x \bmod 1$. It is well known that Lebesgue measure
  is invariant and ergodic for this system, compare \cite{B-Cigler}
  and references therein, wherefore we may employ Birkhoff's ergodic
  theorem \cite{B-Walters} to $f$,
  which is Lebesgue{\ts}-integrable on $[0,1]$ by assumption, and our
  claim follows.
\end{proof}

Note that the exceptional set, for which the limit differs or does not
exist, may depend on $f$ when the latter fails to be continuous. In
fact, there clearly is no uniformly distributed sequence that will
work for \emph{all} $1$-periodic $f \in L^{1}_{\mathrm{loc}} (\RR)$.  Still,
the result of Lemma~\ref{B-lem:per-sample} suggests that something
more general than Fact~\ref{B-fact:per-sample} might also be true when
our multiplier $\alpha$ fails to be an integer. However, we cannot
apply the `trick' with Birkhoff's ergodic theorem when
$\alpha \not\in \ZZ$. This is due to the fact that the sequence
$(\langle \alpha^n x \rangle)^{}_{n\in\Nnull}$, which is uniformly
distributed on $[0,1)$ for a.e.\ $x\in\RR$ by
Fact~\ref{B-fact:power-dist}, does no longer agree with the orbit of
$x$ under the mapping $T$ defined by $x \mapsto \alpha x \bmod{1}$.
The latter, for a.e.\ $x\in\RR$, follows the distribution of the
(ergodic) R\'{e}nyi--Parry measure \cite{B-Ren,B-Par} for $\alpha$,
which is of the form $h_{\alpha} \lambda$ with $h_{\alpha}$ being
Lebesgue{\ts}-integrable on $[0,1)$. When $\alpha\not\in\ZZ$, the
measures $\lambda$ and $h_{\alpha} \lambda$ are still equivalent as
measures, but different; see \cite{B-Cigler} and references therein
for more.

\begin{example}
To illustrate the difference, consider $\alpha = \tau = \frac{1}{2}
\bigl( 1 + \sqrt{5}\,\bigr)$, which is one of the simplest examples
in this context. When $f$ is $1$-periodic and locally Riemann-integrable,
we get
\[
    \myfrac{1}{N} \sum_{n=0}^{N-1} f(\tau^n x)
    \, \xrightarrow{\, N\to\infty\,} 
    \int_{0}^{1} f(x) \dd x \, = \, M(f)
\]
for a.e.\ $x\in\RR$ by Weyl's criterion (Lemma~\ref{B-lem:Weyl}).

In comparison, let $T$ be defined by $x \mapsto \tau x \bmod{1}$ 
on $[0,1)$. Then, for a.e.\ $x\in [0,1)$, the orbits
$(T^n x)^{}_{n\in\Nnull}$ follow the distribution given by
the piecewise constant function \cite[Ex.~4]{B-Ren}
\[
    h_{\tau} (x) \, = \, \begin{cases}
    \frac{5 + 3 \sqrt{5}}{10} , &   0 \leq x < \frac{1}{\tau} , \\[1mm]
    \frac{5 + \sqrt{5}}{10}, & \frac{1}{\tau} \leq x < 1 .
    \end{cases}
\]
Since $T$ is ergodic for the measure $h_{\tau}\lambda$, Birkhoff's
theorem tells us that, for any \mbox{Lebesgue{\ts}}-integrable function
$f$ on $[0,1)$, one has
\[
   \lim_{N\to\infty} \myfrac{1}{N}
   \sum_{n=0}^{N-1} f(T^n x) \, = \int_{0}^{1} f(x) \, 
        h_{\tau} (x) \dd x
\]
for a.e.\ $x\in[0,1)$, and this limit will generally differ from $M(f)$.

Moreover, since the sequences
$( \langle \tau^n x \rangle )^{}_{n\geq 0}$ and
$(T^n x)^{}_{n\geq 0}$ are not easily relatable, one cannot infer the
convergence of averages along the exponential sequence from those
along the orbits under $T$.  \exend
\end{example}

Let us now extend Fact~\ref{B-fact:per-sample} beyond
Riemann-integrable functions by stating one version of Sobol's
theorem \cite[Thm.~1]{B-Sobol}.

\begin{theorem}\label{B-thm:p-Sobol}
  Let\/ $\alpha\in\RR$ with\/ $\lvert \alpha \rvert > 1$ be fixed, and
  consider a\/ $1$-periodic function\/ $f \in L^{1}_{\mathrm{loc}} (\RR)$
  that fails to be locally Riemann-integrable.  Assume that there is a
  finite set\/ $F\subset [0,1]$ such that\/ $f$, for every\/
  $\delta >0$, is Riemann-integrable on the complement of\/
  $F+(-\delta,\delta)$ in\/ $[0,1]$. Assume further that, for every\/
  $z\in F$, there is a\/ $\delta_{z}>0$ such that\/ $f$ is
  differentiable on the punctured interval\/
  $(z-\delta_{z}, z+\delta_{z}) \setminus \{ z \}$ and that, for any\/
  $s>0$,
\[
   V^{}_{\! N} (z,s) \, := \int_{z-\delta_{\nts z}}^{z-\frac{1}{N^{\nts s}}}
   \lvert f' (x) \rvert \dd x \, + 
   \int_{z+\frac{1}{N^{\nts s}}}^{z+\delta_{\nts z}}
   \lvert f' (x) \rvert \dd x \, = \, \scO
   \bigl( N_{\vphantom{I}}^{\frac{s}{2} - \eta} \bigr)
\]
  holds for some\/ $\eta = \eta(z) > 0$ as\/ $N\to\infty$.

Then, for a.e.\/ $x \in \RR$, one has
\[
    \myfrac{1}{N} \sum_{n=0}^{N-1} \! f (\alpha^n x)
    \, \xrightarrow{\, N \to \infty \,} 
    \int_{0}^{1} \! f (y) \dd y \, = \, M (f) \ts .
\]
\end{theorem}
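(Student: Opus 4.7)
The plan is to decompose $f = f_{\mathrm{reg}} + f_{\mathrm{sing}}$, where $f_{\mathrm{reg}}$ coincides with $f$ outside a fixed neighbourhood of $F + \ZZ$ (and is locally Riemann-integrable, hence falls under Fact~\ref{B-fact:per-sample}), while $f_{\mathrm{sing}}$ is supported near $F + \ZZ$ and is handled by an $N$-dependent truncation combining Lemma~\ref{B-lem:Dio}, Fact~\ref{B-fact:discrep}, and the variation hypothesis through Koksma's classical inequality. Fix $\varepsilon > 0$ with $\varepsilon < 2 \eta^{}_{0}$, where $\eta^{}_{0} := \min_{z \in F} \eta (z) > 0$, and assume without loss of generality that the intervals $(z - \delta^{}_{z}, z + \delta^{}_{z})$ with $z \in F$ are pairwise disjoint modulo $1$. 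Pick any fixed $\rho$ with $0 < \rho < \min_{z \in F} \delta^{}_{z} / 2$ and let $f_{\mathrm{reg}}$ equal $f$ on $\{ x \in \RR : \dist (x, F + \ZZ) \geqslant \rho \}$ and vanish elsewhere. Then $f_{\mathrm{reg}}$ is bounded and locally Riemann-integrable, so Fact~\ref{B-fact:per-sample} yields $\myfrac{1}{N} \sum_{n=0}^{N-1} f_{\mathrm{reg}} (\alpha^n x) \to \int_{0}^{1} f_{\mathrm{reg}} (y) \dd y$ for a.e.\ $x \in \RR$.

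For $f_{\mathrm{sing}} := f - f_{\mathrm{reg}}$, I apply Lemma~\ref{B-lem:Dio} with $Y := F + \ZZ$ (uniformly discrete, after choosing distinct representatives of $F$ in $[0,1)$) and the same $\varepsilon$: for a.e.\ $x \in \RR$, there is $n^{}_{0} = n^{}_{0} (x)$ such that $\dist (\alpha^{n} x, Y) \geqslant (n+1)^{-(1+\varepsilon)}$ for all $n \geqslant n^{}_{0} - 1$. Set $s := 1 + \varepsilon$ and $\delta^{}_{N} := N^{-s}$, and define $f^{(N)}$ to equal $f_{\mathrm{sing}}$ on $\{ y \in \RR : \dist (y, Y) \geqslant \delta^{}_{N} \}$ and to vanish elsewhere. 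For such $x$ and $N$ large enough, $f^{(N)} (\alpha^n x) = f_{\mathrm{sing}} (\alpha^n x)$ for every $n \in \{ n^{}_{0} - 1, \ldots, N - 1 \}$, so the Birkhoff sums for $f^{(N)}$ and $f_{\mathrm{sing}}$ differ by $\cO ( n^{}_{0} / N ) = o(1)$. Since $|f^{(N)}| \leqslant |f_{\mathrm{sing}}| \in L^{1}[0,1]$ with $f^{(N)} \to f_{\mathrm{sing}}$ pointwise off the (null) set $F$, dominated convergence also gives $\int_{0}^{1} f^{(N)} \dd y \to \int_{0}^{1} f_{\mathrm{sing}} \dd y$.

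It remains to show $\myfrac{1}{N} \sum_{n=0}^{N-1} f^{(N)} (\alpha^n x) - \int_{0}^{1} f^{(N)} \dd y \to 0$ for a.e.\ $x$. On each component $(z - \rho, z - \delta^{}_{N}] \cup [z + \delta^{}_{N}, z + \rho)$ the function $f^{(N)}$ coincides with $f$ and is differentiable, contributing variation at most $V^{}_{\! N} (z, s) = \scO \bigl( N^{s/2 - \eta (z)} \bigr)$ by hypothesis. The two jumps of $f^{(N)}$ at $z \pm \delta^{}_{N}$ are bounded by $|f(z \pm \delta^{}_{N})| \leqslant |f(z \pm \rho)| + V^{}_{\! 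N} (z, s)$, while the jumps at $z \pm \rho$ are fixed constants. Summing over $z \in F$, the total variation of the $1$-periodic function $f^{(N)}$ on $[0,1]$ satisfies $V(f^{(N)}) = \scO \bigl( N^{s/2 - \eta^{}_{0}} \bigr)$, and Koksma's inequality combined with Fact~\ref{B-fact:discrep} yields, for a.e.\ $x$,
\[
   \left| \myfrac{1}{N} \sum_{n=0}^{N-1} f^{(N)} (\alpha^n x)
   - \int_{0}^{1} f^{(N)} (y) \dd y \right| \, \leqslant \,
   V(f^{(N)}) \cdot \cD^{}_{\! N} \, = \, \scO \bigl(
   N^{\varepsilon/2 - \eta^{}_{0}} (\log N)^{3/2 + \varepsilon'}
   \bigr) \, \longrightarrow \, 0
\]
as $N \to \infty$, since $\varepsilon < 2 \eta^{}_{0}$. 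Combining the three ingredients gives the theorem.

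The main obstacle is the calibration of scales. The Diophantine bound from Lemma~\ref{B-lem:Dio} forces $\delta^{}_{N}$ to be at most of order $N^{-(1+\varepsilon)}$ so that every orbit point clears the singular set, while Koksma's inequality coupled with the square-root-type discrepancy of Fact~\ref{B-fact:discrep} requires $V(f^{(N)}) = o (N^{1/2})$. The hypothesis $V^{}_{\! N} (z, s) = \scO \bigl( N^{s/2 - \eta} \bigr)$ is precisely the sharp compatibility condition that makes both constraints simultaneously satisfiable at $s = 1 + \varepsilon$, with the margin $\eta^{}_{0} - \varepsilon / 2 > 0$ absorbing the polylogarithmic factor of the discrepancy bound.
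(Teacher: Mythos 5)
Your proof is correct and follows essentially the same route as the paper: the same decomposition into a regular part (handled by Weyl's criterion via Fact~\ref{B-fact:per-sample}) and a singular part near $F+\ZZ$, controlled by the interplay of Lemma~\ref{B-lem:Dio}, the discrepancy bound of Fact~\ref{B-fact:discrep}, and the $V^{}_{\!N}(z,s)$ hypothesis. The only difference is that you make explicit, via Koksma's inequality and the $N$-dependent truncation at scale $N^{-(1+\varepsilon)}$, the ``sufficient criterion'' step that the paper's sketch delegates to Sobol's original argument and to \cite{B-Hart}.
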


\begin{proof}[Sketch of proof]
  Since $F$ is finite, we may choose
  $0 < \delta \leqslant \min_{z\in F} \, \delta_{z}$ small enough such that
  the open sets $(z-\delta, z+\delta)$ with $z\in F$ are disjoint. By
  writing $f$ as a sum of a locally Riemann-integrable function (such
  as the restriction $f^{(\delta)}$ of $f$ to the complement of
  $\ZZ + F + (-\delta,\delta)$) and $r = \card (F)$ `problematic'
  terms, the latter supported on $(z-\delta, z+\delta)$ with $z\in F$,
  it is clear that our claim follows if we can deal with one of these
  problematic terms. So, select one $z\in F$.  Without loss of
  generality, we may assume that $(z-\delta, z+\delta) \subset [0,1]$,
  as we can otherwise shift the unit interval because $f$ and
  $f^{(\delta)}$ are $1$-periodic.

  One can now repeat the original proof from \cite{B-Sobol}, or the
  more extensive version in \cite[Sec.~2]{B-Hart}. Here, the validity
  of the convergence claim emerges from the observation that, for
  a.e.\ $x\in\RR$, the number $\langle \alpha^{n-1} x \rangle$ does
  not come closer to $z$ than $1/n^{1+\varepsilon}$, for any fixed
  $\varepsilon > 0$ and then all $n\in\NN$ except at most finitely
  many. This follows from Lemma~\ref{B-lem:Dio} with $Y = z+\ZZ$.
  Now,
\[
   V^{}_{\! N} (z,1+\varepsilon) \, = \, \scO \Bigl(
   N_{\vphantom{I}}^{\frac{1+\varepsilon}{2} - \eta}\Bigr)
\]
for some $\eta > 0$ by assumption. Since $\eta$ does not depend on
$\varepsilon$, we are still free to choose $\varepsilon > 0$ small
enough so that $\vartheta := \eta - \frac{\varepsilon}{2} > 0$.

Now, the potentially large contribution to our averaging sum from
sequence elements close to $z$ are properly `counterbalanced' by the
discrepancy of $(\alpha^{n} x)^{}_{n\in\NN}$, where
we invoke Fact~\ref{B-fact:discrep} with the $\varepsilon$ just
chosen.  One obtains
\begin{equation}\label{B-eq:bound}
    \cD^{}_{\nts N} \cdot V^{}_{\! N} (z, 1 +\varepsilon) \, = \,
    \scO \biggl( \frac{\bigl(\log (N)\bigr)^{\frac{3}{2} + 
    \varepsilon}} {N^{\vartheta}}\biggr) \, = \, \scO (1) \ts ,
\end{equation}
which is a sufficient criterion for the claimed convergence because
\[
    \myfrac{1}{N} \sum_{n=0}^{N-1} f^{(\delta)} (\alpha^n x)
    \, \xrightarrow{\,N\to\infty\,} \, M \bigl( f^{(\delta)}\bigr)
\]
holds for a.e.\ $x\in\RR$, while the Birkhoff average of
$f-f^{(\delta)}$ is controlled by Eq.~\eqref{B-eq:bound} and tends to
$0$ as $\delta\searrow 0$.
\end{proof}

\begin{remark}\label{B-rem:min-eta}
  The assumption that $F$ in Theorem~\ref{B-thm:p-Sobol} is a finite
  set implies $\delta := \min_{z\in F}\, \delta_{z} >0$ as well
  as $\min_{z\in F}\, \eta(z) >0$. Later, we will replace this setting
  by a suitable compactness assumption to extend the result of this
  theorem to almost periodic functions.  \exend
\end{remark}

\begin{remark}\label{B-rem:bad}
  The differentiability assumption for $f$ near the `bad' points is
  convenient, but not necessary. It can be replaced by the requirement
  that the total variation of $f$ on sets of
  the form $(z-\delta, z-N^{-s} ] \cup [z+N^{-s}, z+\delta)$ behaves
    as stated for $V^{}_{\! N} (z,s)$; compare \cite{B-Sobol,B-Hart}.
    \exend
\end{remark}

As mentioned earlier, results of this type are also of interest for
the numerical calculation of integrals, for instance with methods of
(quasi-) Monte Carlo type. In our context, an important question is
how to extend Riesz--Raikov sums and Birkhoff
averages to functions that fail to be
periodic, but possess some repetitivity structure instead.

\section{Averaging almost periodic functions}

At this point, we need to recall some basic definitions and results
from the theory of almost periodic functions in the sense of Bohr
\cite{B-Bohr}, where we refer to \cite[Sec.~8.2]{B-TAO} for a short
summary, to \cite[Sec.~VI.5]{B-Katz} or \cite{B-Cord} for
comprehensive expositions, and to \cite[Sec.~41]{B-Loo} for a more
general and abstract setting (including non-Abelian groups).

Recall that $f\in C (\RR)$ is called \emph{almost periodic in the
  sense of Bohr} if, for any $\varepsilon
> 0$, the set of $\varepsilon$-almost periods
\[
     \cP_{\varepsilon} \, := \, \big\{ t \in \RR :
     \| f - T_{t} f \|^{}_{\infty} < \varepsilon \big\}
\]
is relatively dense in $\RR$. Here,
$\bigl(T_{t} f\bigr) (x) := f(x-t)$ defines the $t$-translate of
$f$. Any continuous periodic function is almost periodic in this
sense, as is any trigonometric polynomial. Any Bohr-almost periodic
function is bounded and uniformly continuous.  In fact, the
$\|.\|^{}_{\infty}$-closure of the (complex) algebra of trigonometric
polynomials is precisely the space of \emph{all} Bohr-almost periodic
functions \cite{B-Bohr}.

For comparison, $f\in C (\RR)$ is called \emph{almost periodic in the
  sense of Bochner} (for
$\|.\|^{}_{\infty}$, to be precise) if the translation orbit $\{ T_{t}
f : t\in\RR\}$ is precompact in the $\|.\|^{}_{\infty}$-topology. The
fundamental relation among these notions can be summarised as follows;
see \cite[Prop.~8.2]{B-TAO} as well as \cite{B-Katz,B-Cord}.

\begin{fact}\label{B-fact:ap-equiv}
   For\/ $f\in C(\RR)$, the following properties are equivalent.
   \begin{enumerate}\itemsep=2pt
   \item $f$ is Bohr-almost periodic, i.e., $\cP_{\varepsilon}$
      is relatively dense for any\/ $\varepsilon > 0$;
    \item $f$ is Bochner-almost periodic for\/ $\| . \|^{}_{\infty}$,
      i.e., the orbit\/ $\{ T_{t} f : t\in\RR \}$ is precompact in the\/
      $\|.\|^{}_{\infty}$-topology;
   \item $f$ is the limit of a sequence of trigonometric polynomials,
      with uniform convergence of the sequence on\/ $\RR$. \qed
   \end{enumerate}
\end{fact}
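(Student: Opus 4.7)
The plan is to prove the cyclic chain $(3) \Rightarrow (1) \Rightarrow (2) \Rightarrow (1) \Rightarrow (3)$, where the last implication carries the bulk of the analytic content. The first three implications are essentially soft arguments built on almost-periods and uniform continuity, while the final one is Bohr's approximation theorem.

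For $(3) \Rightarrow (1)$, I would first check that every trigonometric polynomial $p(x) = \sum_{k=1}^{n} a^{}_{k}\ts \ee^{2 \pi \ii \lambda^{}_{k} x}$ is Bohr-almost periodic: Kronecker's simultaneous Diophantine approximation theorem produces relatively densely many $t\in\RR$ for which each $\langle \lambda^{}_{k} t \rangle$ is arbitrarily small, whence $\|p - T^{}_{t} p\|^{}_{\infty}$ becomes arbitrarily small. Then I would show that the class of Bohr-almost periodic functions is closed under uniform convergence by a standard $3\varepsilon$-argument: if $\|f - f^{}_{n}\|^{}_{\infty} < \varepsilon/3$, the translation invariance of $\|.\|^{}_{\infty}$ implies $\cP^{}_{\varepsilon/3}(f^{}_{n}) \subseteq \cP^{}_{\varepsilon}(f)$, so the relative density of the former carries over.

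For $(1) \Rightarrow (2)$, I would first derive boundedness and uniform continuity of any Bohr-almost periodic $f$: relative density of $\cP^{}_{1}$ with gap at most $\ell$ reduces the sup-norm and the modulus of continuity of $f$ on $\RR$ to the corresponding quantities on a fixed compact interval. Given any sequence $(t^{}_{n}) \subset \RR$, I would decompose $t^{}_{n} = \tau^{}_{n} + s^{}_{n}$ with $\tau^{}_{n} \in \cP^{}_{\varepsilon/3}$ and $s^{}_{n}$ in a bounded interval of length $\ell^{}_{\varepsilon}$; after extracting a subsequence so that $s^{}_{n^{}_{k}}$ converges, uniform continuity together with the almost-period property makes $(T^{}_{t^{}_{n^{}_{k}}} f)$ a $\|.\|^{}_{\infty}$-Cauchy sequence within tolerance $\varepsilon$. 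A Cantor diagonal over $\varepsilon = 1/m$ then yields a genuinely convergent subsequence, proving precompactness. Conversely, for $(2) \Rightarrow (1)$, I would fix $\varepsilon > 0$, cover $\{T^{}_{t} f : t\in\RR\}$ by finitely many $\varepsilon/2$-balls centred at $T^{}_{t^{}_{1}} f, \ldots, T^{}_{t^{}_{N}} f$, and use translation invariance of $\|.\|^{}_{\infty}$ to rewrite the statement ``$T^{}_{t} f$ lies in the $i$-th ball'' as ``$t - t^{}_{i} \in \cP^{}_{\varepsilon}$''; this forces every interval of length $2 \max^{}_{i} \lvert t^{}_{i} \rvert$ to meet $\cP^{}_{\varepsilon}$.

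The main obstacle is $(1) \Rightarrow (3)$, Bohr's approximation theorem, which needs genuine harmonic-analytic machinery rather than soft arguments. Here I would first establish the existence of the mean $M(f) = \lim^{}_{T\to\infty} \frac{1}{2T} \int_{-T}^{T} f(y) \dd y$, uniformly with respect to the centre of the averaging interval, by exploiting $\varepsilon$-almost periods to show that the finite averages are Cauchy. Defining Bohr--Fourier coefficients $c(\lambda) := M \bigl( f \cdot \overline{\psi^{}_{\lambda}} \bigr)$ with $\psi^{}_{\lambda}(x) = \ee^{2\pi \ii \lambda x}$, a Bessel-type inequality bounds $\sum^{}_{\lambda} \lvert c(\lambda)\rvert^{2}$ by $M \bigl( \lvert f \rvert^{2} \bigr)$, so the spectrum $\{ \lambda : c(\lambda) \ne 0 \}$ is at most countable. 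The substantive step is then the construction of Bochner--F\'{e}jer kernels: for any finite ``base'' $\{ \beta^{}_{1}, \ldots, \beta^{}_{r} \}$ that rationally approximates the active frequencies up to prescribed accuracy, a suitable multivariate F\'{e}jer-type kernel produces trigonometric polynomials $\sigma^{}_{N}(f)$ which converge uniformly to $f$. An alternative route, which I would mention but not develop, is to use $(2)$ to extend $f$ continuously to the Bohr compactification $b\RR$ and then invoke Stone--Weierstrass or Peter--Weyl on this compact abelian group; either way, this approximation result is the essential analytic content and cannot be shortcut by the earlier, softer steps.
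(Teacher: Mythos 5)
The paper does not prove this Fact at all --- it is stated with a \qed and delegated to \cite[Prop.~8.2]{B-TAO} and \cite{B-Katz,B-Cord} --- and your outline reproduces exactly the classical argument given in those references: the Bochner compactness characterisation for $(1)\Leftrightarrow(2)$, common almost periods of finitely many exponentials for $(3)\Rightarrow(1)$, and Bohr's approximation theorem (via Bochner--F\'ejer kernels, or equivalently Stone--Weierstrass on the Bohr compactification) for $(1)\Rightarrow(3)$; the outline is correct. The one step worth sharpening is $(3)\Rightarrow(1)$: what you need is not merely the existence of good $t$ from Kronecker's theorem but the \emph{relative density} of the set $\{\ts t\in\RR : \dist(\lambda^{}_{k}\ts t,\ZZ)<\delta \text{ for all } k\ts\}$, i.e.\ syndeticity of the return times of the linear flow on the torus to a neighbourhood of the identity --- true and standard (it follows from minimality/equicontinuity of that flow on the orbit closure), but it is an extra argument beyond the usual statement of Kronecker's theorem, and it is precisely the point that makes the sum of two Bohr-almost periodic functions nontrivially almost periodic.
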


In view of these relations, we follow \cite{B-Besi} and speak of
\emph{uniformly almost periodic
  functions} from now on when we
refer to this class. If misunderstandings are unlikely, we will drop
the attribute `uniformly'.  Let us elaborate a little on part $(3)$ of
Fact~\ref{B-fact:ap-equiv}.  If $f$ is almost periodic, its
mean
\begin{equation}\label{B-eq:mean-def}
      M(f) \, = \lim_{T\to\infty} \myfrac{1}{2\ts T}
      \int_{a-T}^{a+T} f(x) \dd x
\end{equation}
exists for any $a\in\RR$, is independent of $a$, and the convergence
is uniform in $a$; compare \cite{B-NS} for a more detailed discussion
of this concept.  When we need to emphasise the role of $a$ for more
general types of functions (say without uniformity of the limit in
$a$), we will write $M(f;a)$.

The \emph{Fourier--Bohr coefficient}
of an almost periodic function $f$ at $k\in\RR$ is given by
\[
        a(k) \, = \, M \bigl( \ee^{-2 \pi \ii k (.)} f \bigr) .
\]
It exists for any $k\in\RR$, and differs from $0$ for at most
countably many values of $k$. Any $k\in\RR$ with $a(k) \ne 0$ is
called a \emph{frequency} of $f$. If $\{ k^{}_{\ell} \}$ is the set of
frequencies of $f$, there is a sequence of trigonometric polynomials
of the form
\begin{equation}\label{B-eq:trig-approx}
   P^{(m)} (x) \, = \sum_{\ell=1}^{n^{}_{m}}
   r^{(m)}_{\ell} \, a (k^{}_{\ell}) \, \ee^{2 \pi \ii k^{}_{\ell} x}
\end{equation}
that converge uniformly to $f$ on $\RR$ as $m\to\infty$. Here, the
numbers $r^{(m)}_{\ell}$, which are known as convergence enforcing
numbers, depend on $m$ and $k^{}_{\ell}$, but not on $a(k^{}_{\ell})$,
and can be chosen as rational numbers \cite[Thm.~I.1.24]{B-Cord}.

To approach averages of almost periodic functions, it is thus more
than natural to begin with the averages of trigonometric polynomials.
We formulate the next result for more general sequences than the
exponential ones from above.

\begin{proposition}\label{B-prop:mean-conv}
  Let\/ $P_{m}$ be a $(\nts$complex$\ts\ts)$ trigonometric polynomial of the
  form
  \[   P_{m} (x) \, = \, a^{}_{0} \ts + \sum_{\ell=1}^{m} a^{}_{\ell} \, 
  \ee^{2 \pi \ii k_{\ell} x} \ts ,    \]
  with coefficients\/ $a^{}_{\ell} \in \CC$ and distinct non-zero
  frequencies\/ $k^{}_{1}, \ldots , k^{}_{m}$. Further, let\/
  $(u^{}_{n})^{}_{n\in\NN_{0}}$ be a sequence of real numbers
  such that\/ $\inf_{n\ne m} \, \lvert u^{}_{n} - u^{}_{m} \rvert > 0$.
  Then, for a.e.\/ $x\in\RR$, one has
\[
     \lim_{N\to\infty} \myfrac{1}{N} \sum_{n=0}^{N-1} 
     P_{m} (u^{}_{n} x) \, = \, M (P_{m}) 
     \, = \, a^{}_{0} \ts .
\]
  In particular, this holds for\/ $u^{}_{n} = \alpha^{n}$ with\/
  $\alpha\in\RR$ and\/ $\lvert \alpha \rvert > 1$.
\end{proposition}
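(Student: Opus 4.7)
The plan is to reduce the assertion, by linearity of the Ces\`{a}ro average, to the case of a single non-constant trigonometric monomial, and then invoke the generalised second half of Example~\ref{B-ex:trig-mono} (which itself rests on Remark~\ref{B-rem:gen-u} and Weyl's criterion) once for each of the finitely many frequencies. Writing
\[
   P_{m} (u^{}_{n} \ts x) \, = \, a^{}_{0} \, +
   \sum_{\ell=1}^{m} a^{}_{\ell} \,
   \ee^{2 \pi \ii k^{}_{\ell} u^{}_{n} x} ,
\]
the $N$-average of the constant term is just $a^{}_{0}$ for every $N$, so it suffices to show that, for each fixed $\ell \in \{1,\ldots,m\}$, the partial sum $\frac{1}{N} \sum_{n=0}^{N-1} \ee^{2 \pi \ii k^{}_{\ell} u^{}_{n} x}$ tends to $0$ as $N\to\infty$, outside a Lebesgue null set $E_{\ell} \subset \RR$ that may depend on $\ell$.

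To verify this for a fixed $\ell$, I first observe that the rescaled sequence $(k^{}_{\ell} u^{}_{n})^{}_{n\in\NN_{0}}$ still satisfies the uniform gap hypothesis of Remark~\ref{B-rem:gen-u}, since $k^{}_{\ell} \ne 0$ gives
\[
   \inf_{n\ne m} \, \lvert k^{}_{\ell} u^{}_{n} -
   k^{}_{\ell} u^{}_{m} \rvert \, = \, \lvert k^{}_{\ell} \rvert
   \cdot \inf_{n\ne m} \, \lvert u^{}_{n} - u^{}_{m} \rvert
   \, > \, 0 \ts .
\]
By Remark~\ref{B-rem:gen-u}, the sequence $(k^{}_{\ell} u^{}_{n} \ts x)^{}_{n\in\NN_{0}}$ is therefore uniformly distributed modulo $1$ for a.e.\ $x\in\RR$, and applying Weyl's criterion from Lemma~\ref{B-lem:Weyl}(4) (with the integer $k=1$) yields the desired convergence to $0$ outside some null set $E_{\ell}$, which is the content of the `more generally' extension stated at the end of Example~\ref{B-ex:trig-mono}.

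Setting $E := \bigcup_{\ell=1}^{m} E_{\ell}$, which is a null set as a finite union of null sets, the linearity identity above then gives $\frac{1}{N} \sum_{n=0}^{N-1} P_{m} (u^{}_{n} x) \to a^{}_{0} = M(P_{m})$ for every $x \in \RR \setminus E$, as claimed. The special case $u^{}_{n} = \alpha^{n}$ with $\lvert \alpha \rvert > 1$ then follows by noting that, for $0 \leqslant n < n'$, the reverse triangle inequality gives $\lvert \alpha^{n'} - \alpha^{n} \rvert = \lvert \alpha \rvert^{n} \lvert \alpha^{n'-n} - 1 \rvert \geqslant \lvert \alpha \rvert^{n'-n} - 1 \geqslant \lvert \alpha \rvert - 1 > 0$, which verifies the separation hypothesis for the geometric sequence.

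There is no genuine obstacle in this argument; it is essentially a bookkeeping reduction that outsources the analytic content to Example~\ref{B-ex:trig-mono}. The only subtle point worth flagging for what follows is that the exceptional set $E$ depends on the frequencies $k^{}_{1}, \ldots, k^{}_{m}$, and hence on the choice of polynomial $P^{}_{m}$. This dependence will matter in the next step, where one wants to pass from trigonometric polynomials to general (uniformly) almost periodic functions through the approximation in Eq.~\eqref{B-eq:trig-approx}, because one then needs a \emph{single} full-measure set of $x$ that works simultaneously for an entire approximating sequence of trigonometric polynomials.
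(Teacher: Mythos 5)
Your argument is correct and follows essentially the same route as the paper's own proof: reduce by linearity to the individual monomials, invoke the extension of Example~\ref{B-ex:trig-mono} (i.e.\ Remark~\ref{B-rem:gen-u} plus Weyl's criterion) for each non-zero frequency, and take the finite union of the exceptional null sets. The only cosmetic difference is that you absorb the frequency $k^{}_{\ell}$ into the sequence rather than into the period of $\psi^{}_{k_{\ell}}$, which is equivalent; your closing remark about the $P_{m}$-dependence of the exceptional set is also exactly the point that the nested-frequency construction in the proof of Theorem~\ref{B-thm:Bohr} is designed to handle.
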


\begin{proof}
  The claim is obvious for $m=0$, where the polynomial is
  constant.  The case $m=1$ with $a^{}_{0}=0$, where $P_{m}$ is a
  monomial, is Example~\ref{B-ex:trig-mono} from above. So, for a
  general $P_{m}$, the claim is true for each summand individually,
  with an exceptional set $E(k^{}_{\ell})$ of measure $0$ for $\ell\geq
  1$. Since $\bigcup_{\ell=1}^{m} E(k^{}_{\ell})$ is still a null set,
  the statement on the limit is clear, while its value follows from a
  simple calculation with the mean; compare
  Example~\ref{B-ex:trig-mono}.
\end{proof}

Before we proceed, let us recall the following useful property
of the mean.

\begin{lemma}\label{B-lemma:mean-approx}
  Let\/ $(g^{}_{n})^{}_{n\in\NN}$ be a sequence of complex-valued, but
  not necessarily continuous, functions on\/ $\RR$ that converge
  uniformly to a function\/ $f$. Assume further that the mean\/
  $M(g^{}_{n})$ exists for all\/ $n\in\NN$. Then, also\/ $M(f)$
  exists, and\/ $\, \lim_{n\to\infty} \ts M(g^{}_{n}) = M(f)$. In
  particular, one has
\[
    M(f) \, = \lim_{T\to\infty} \myfrac{1}{2 \ts T}
    \int_{a-T}^{a+T} \! f(x) \dd x
\]
for any fixed\/ $a\in\RR$. When the convergence of the means\/
$M(g^{}_{n}) = M(g^{}_{n}; a)$ is uniform in\/ $a$, then so is
the convergence of\/ $M(f)$.
\end{lemma}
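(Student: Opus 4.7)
The plan is a standard three-epsilon argument exploiting that the mean is continuous with respect to uniform convergence. First, I would observe that the numerical sequence $\bigl(M(g^{}_{n})\bigr)^{}_{n\in\NN}$ is Cauchy in $\CC$. Indeed, since $M$ is linear on functions for which it exists and $\lvert M(h) \rvert \leqslant \| h \|^{}_{\infty}$ whenever $M(h)$ exists (this being immediate from the definition \eqref{B-eq:mean-def}), one has
\[
   \lvert M(g^{}_{n}) - M(g^{}_{m}) \rvert \, = \,
   \lvert M(g^{}_{n} - g^{}_{m}) \rvert \, \leqslant \,
   \| g^{}_{n} - g^{}_{m} \|^{}_{\infty} \ts ,
\]
and the right-hand side tends to $0$ as $n,m \to \infty$ by the assumed uniform convergence. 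Call the limit $L$.

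Next, fix $a\in\RR$ and $\varepsilon > 0$. Choose $n\in\NN$ so large that $\| f - g^{}_{n} \|^{}_{\infty} < \varepsilon/3$ and $\lvert M(g^{}_{n}) - L \rvert < \varepsilon/3$. Since $M(g^{}_{n}; a)$ exists, there is a $T^{}_{0} = T^{}_{0}(a,n) > 0$ such that, for all $T \geqslant T^{}_{0}$,
\[
   \left\lvert \myfrac{1}{2 \ts T} \int_{a-T}^{a+T} \! g^{}_{n} (x)
   \dd x \, - \, M(g^{}_{n}) \right\rvert \, < \, \myfrac{\varepsilon}{3} \ts .
\]
Combining these three estimates via the triangle inequality, and using that $\bigl\lvert \frac{1}{2T} \int_{a-T}^{a+T} (f - g^{}_{n})(x) \dd x \bigr\rvert \leqslant \| f - g^{}_{n} \|^{}_{\infty} < \varepsilon/3$, one gets
\[
   \left\lvert \myfrac{1}{2 \ts T} \int_{a-T}^{a+T} \! f (x) \dd x \, - \, L
   \right\rvert \, < \, \varepsilon
\]
for all $T \geqslant T^{}_{0}$. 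This shows that the limit on the right-hand side of \eqref{B-eq:mean-def} exists and equals $L = \lim_{n\to\infty} M(g^{}_{n})$, which is the first claim.

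Finally, for the uniformity assertion, note that in the argument above the index $n$ and the bound $\varepsilon/3$ on $\lvert M(g^{}_{n}) - L \rvert$ and on $\| f - g^{}_{n} \|^{}_{\infty}$ are chosen independently of $a$. Only the threshold $T^{}_{0}$ depends on $a$ through the convergence rate of $\frac{1}{2T}\int_{a-T}^{a+T} g^{}_{n}(x) \dd x$ to $M(g^{}_{n}; a)$. If this latter convergence is assumed uniform in $a$, then $T^{}_{0}$ can be chosen independently of $a$, and hence so can the resulting threshold for the convergence of $\frac{1}{2T}\int_{a-T}^{a+T} f(x) \dd x$ to $M(f)$. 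There is no serious obstacle here; the main thing to keep track of is that the single quantifier chain $(\varepsilon \to n \to T)$ introduces the dependence on $a$ only at the very last step.
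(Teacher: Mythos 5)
Your proof is correct and takes essentially the same route as the paper's own proof: a three-epsilon argument showing first that $\bigl(M(g^{}_{n})\bigr)^{}_{n\in\NN}$ is Cauchy, then comparing $\frac{1}{2T}\int_{a-T}^{a+T} f$ to the limit via a fixed large $n$, with the uniformity in $a$ entering only through the threshold $T^{}_{0}$. The only cosmetic difference is that you obtain the Cauchy property directly from linearity and $\lvert M(h)\rvert \leqslant \lVert h \rVert^{}_{\infty}$, whereas the paper reaches the same bound by inserting finite-$T$ averages and letting $T$ grow.
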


\begin{proof}
  The assumed uniform convergence also means that
  $(g^{}_{n})^{}_{n\in\NN}$ is a Cauchy sequence in the
  $\|.\|^{}_{\infty}$-topology. Fix $\varepsilon>0$ and choose
  $n^{}_{0} = n^{}_{0} (\varepsilon)$ such that
  $\| g^{}_{n} - f \|^{}_{\infty} < \varepsilon$ as well as
  $\| g^{}_{n} - g^{}_{m} \|^{}_{\infty} < \varepsilon$ holds for all
  $n,m \geqslant n^{}_{0}$. Then, for any $T>0$, one has
\begin{equation}\label{B-eq:esti}
   \myfrac{1}{2 \ts T} \biggl| \int_{-T}^{T} 
   \bigl( g^{}_{n} (x) - g^{}_{m} (x)
   \bigr) \dd x \,\biggr|  \, \leqslant \, 
   \big\| g^{}_{n} - g^{}_{m} \big\|_{\infty} \, < \, \varepsilon
\end{equation}
for all $n,m \geqslant n^{}_{0}$, which implies
\[
\begin{split}
   \bigl| M (g^{}_{n}) & - M (g^{}_{m}) \bigr| \,  \leqslant \,
   \big\| g^{}_{n} - g^{}_{m} \big\|_{\infty}  \\[2mm] & 
   + \biggl| M (g^{}_{n}) - \myfrac{1}{2 \ts T} \int_{-T}^{T}
   g^{}_{n} (x) \dd x \, \biggr| + \biggl| 
   M (g^{}_{m}) - \myfrac{1}{2 \ts T} \int_{-T}^{T}
   g^{}_{m} (x) \dd x \, \biggr|  .
\end{split}   
\]
Consequently, $ \bigl| M (g^{}_{n}) - M (g^{}_{m}) \bigr| < 3 \ts
\varepsilon$ for all sufficiently large $T$ due to our assumption on
the existence of the means $M (g^{}_{n})$. Note that, although $T$ may
depend on $m$ and $n$, the above $3\ts\varepsilon$-estimate still works as a
consequence of Eq.~\eqref{B-eq:esti}. The sequence $\bigl( M( g^{}_{n}
) \bigr)_{n\in\NN}$ is thus Cauchy, hence convergent, with limit
$\mathfrak{M}$, say.

Now, choose $n\geqslant n^{}_{0}$ large enough such that also
$\lvert M (g^{}_{n}) - \mathfrak{M} \ts \rvert < \varepsilon$ holds,
fix an arbitrary $a\in\RR$, and consider
\[
\begin{split}
   \biggl| \myfrac{1}{2 \ts T} \int_{a-T}^{a+T}
    \! f (x)  \dd x - \mathfrak{M}
   \, \biggr| \, & \leqslant \, \big\| f - g^{}_{n} \big\|_{\infty}
   + \bigl| M (g^{}_{n}) - \mathfrak{M} \ts \bigr|  \\[1mm]
   & \quad + \biggl| \myfrac{1}{2 \ts T}
   \int_{a-T}^{a+T} \! g^{}_{n} (x) \dd x - M (g^{}_{n}) \, \biggr| 
   \, < \, 3 \ts \varepsilon \ts ,
\end{split}
\]
where the last step holds for all sufficiently large $T$ by
assumption. This derivation implies
\[
     \lim_{T\to\infty} \myfrac{1}{2 \ts T} 
     \int_{a-T}^{a+T} \! f(x) \dd x \, = \, 
     \mathfrak{M} \, =
     \lim_{n\to\infty} M (g^{}_{n}) \ts ,
\]
which is independent of $a\in\RR$, and $M(f) = \mathfrak{M}$ is the
claimed mean of $f$.

When, in addition, the means of the functions $g^{}_{n}$ exist
uniformly in $a$, our $3\ts \varepsilon$-argument also implies
that the convergence of $M(f;a)$ is uniform in $a\in\RR$ as claimed.
\end{proof}

This enables us to formulate the following result.

\begin{theorem}\label{B-thm:Bohr}
  Let\/ $\alpha\in\RR$ with\/ $\lvert \alpha \rvert > 1$ be given,
  and let\/ $f$ be a Bohr-almost periodic function on\/ $\RR$.
  Then, for a.e.\/ $x\in\RR$, one has
\[
    \lim_{N\to\infty} \myfrac{1}{N} \sum_{n=0}^{N-1} 
     f (\alpha^{n} x) \, = \, M (f) \ts . 
\]
\end{theorem}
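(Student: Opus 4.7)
The plan is to approximate $f$ uniformly by trigonometric polynomials, apply Proposition~\ref{B-prop:mean-conv} to each of them on a common full-measure set, and then pass to the limit via a standard $3\ts\varepsilon$-argument, using Lemma~\ref{B-lemma:mean-approx} to identify the limit with $M(f)$.

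More concretely, by Fact~\ref{B-fact:ap-equiv}(3), there is a sequence $\bigl(P^{(m)}\bigr)^{}_{m\in\NN}$ of trigonometric polynomials with $\| f - P^{(m)} \|^{}_{\infty} \xrightarrow{\, m\to\infty\,} 0$. For each fixed $m$, Proposition~\ref{B-prop:mean-conv} (applied with $u^{}_{n} = \alpha^{n}$) yields a null set $E^{}_{m}\subset\RR$ such that, for all $x\in\RR\setminus E^{}_{m}$,
\[
   \myfrac{1}{N} \sum_{n=0}^{N-1} P^{(m)}(\alpha^{n} x)
   \, \xrightarrow{\, N\to\infty\,} \, M\bigl(P^{(m)}\bigr).
\]
Setting $E := \bigcup_{m\in\NN} E^{}_{m}$ still gives a null set, and the above convergence holds simultaneously for every $m\in\NN$ whenever $x\in\RR\setminus E$.

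Now fix such an $x$ and let $\varepsilon>0$. Choose $m$ large enough that $\|f-P^{(m)}\|^{}_{\infty}<\varepsilon$, which by Lemma~\ref{B-lemma:mean-approx} also gives $\bigl|M(f) - M(P^{(m)})\bigr| \leqslant \varepsilon$. Writing
\[
\begin{split}
   \biggl| \myfrac{1}{N}\sum_{n=0}^{N-1} f(\alpha^{n} x) - M(f) \biggr|
   \, &\leqslant \, \bigl\| f - P^{(m)} \bigr\|^{}_{\infty} \\
   & \quad + \biggl|\myfrac{1}{N}\sum_{n=0}^{N-1} P^{(m)}(\alpha^{n} x)
     - M\bigl(P^{(m)}\bigr)\biggr|
     + \bigl|M\bigl(P^{(m)}\bigr) - M(f)\bigr|,
\end{split}
\]
the first and third terms are bounded by $\varepsilon$, and the middle term is smaller than $\varepsilon$ for all $N$ sufficiently large, since $x\notin E^{}_{m}$. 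Hence the left-hand side is at most $3\ts\varepsilon$ for $N$ large, which proves the convergence for a.e.\ $x\in\RR$.

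There is no real obstacle here: the only point requiring attention is that the exceptional set for the convergence must be independent of the approximating polynomial, which is taken care of by using a single sequence $\bigl(P^{(m)}\bigr)^{}_{m\in\NN}$ of approximants and forming the countable union $E = \bigcup_{m} E^{}_{m}$. The mechanism underlying the whole argument is Proposition~\ref{B-prop:mean-conv}, whose availability for the exponential sequence $(\alpha^{n})^{}_{n\in\Nnull}$ (which trivially satisfies $\inf_{n\ne k} |\alpha^{n}-\alpha^{k}|>0$) makes the argument work in exactly the same generality as Remark~\ref{B-rem:gen-u}.
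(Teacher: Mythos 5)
Your argument is correct and follows essentially the same route as the paper's own proof: uniform approximation by trigonometric polynomials via Fact~\ref{B-fact:ap-equiv}(3), Proposition~\ref{B-prop:mean-conv} on a common full-measure complement of a countable union of null sets, and a $3\ts\varepsilon$-estimate closed up by Lemma~\ref{B-lemma:mean-approx}. The only cosmetic difference is that the paper chooses the approximants with nested frequency sets so that the exceptional sets are increasing, which is not actually needed since, as you observe, a countable union of null sets is null in any case.
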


\begin{proof}
  Let $(g^{}_{n})^{}_{n\in\NN}$ be a sequence of trigonometric
  polynomials that converge uniformly to $f$. As is well known,
  compare \cite{B-Cord}, and is a rather direct consequence of
  Eq.~\eqref{B-eq:trig-approx}, the sequence can be chosen such that
  the frequency sets $\{ k^{}_{j} : 1 \leqslant j \leqslant m^{}_{n} \}$ of the
  $g^{}_{n}$ are nested.  By Proposition~\ref{B-prop:mean-conv}, we
  know that, for every $n\in\NN$,
\[
    \lim_{N\to\infty} \myfrac{1}{N} \sum_{\ell=0}^{N-1}
    g^{}_{n} (\alpha^{\ell} x) \, = \, M (g^{}_{n})
\]
holds for a.e.\ $x\in\RR$, where we denote the excluded null set by
$E_{n}$. By construction, we have $E_{n} \subseteq E_{n+1}$, and
$E:= \bigcup_{n\in\NN} E_{n}$ is still a null set.

Define the Birkhoff average of $\varphi$ at $x$ as $S^{}_{\nts N}
(\varphi, x) = \frac{1}{N} \sum_{n=0}^{N-1} \varphi (\alpha^{n} x)$,
and fix some $\varepsilon > 0$. Choose $n^{}_{0} = n^{}_{0}
(\varepsilon)$ such that $\| f - g^{}_{n} \|^{}_{\infty} <
\varepsilon$ for all $n\geqslant n^{}_{0}$, which is possible under our
assumptions.  Now, for any fixed $x \in \RR \setminus E$, we can
estimate
\[
   \bigl| S^{}_{\nts N} (f, x) - M(f) \bigr| \, \leqslant \,
   \bigl| S^{}_{\nts N} (f \nts - g^{}_{n}, x) \bigr| +
   \bigl| S^{}_{\nts N} (g^{}_{n}, x) - M(g^{}_{n}) \bigr| +
   \bigl| M(g^{}_{n}) - M(f) \bigr|
\]
where, independently of $N$,
\[
   \bigl| S^{}_{\nts N} (f \nts - g^{}_{n}, x) \bigr|
   \, \leqslant \, S^{}_{\nts N} \bigl( \lvert
   f \nts - g^{}_{n} \rvert , x \bigr) \, \leqslant \,
   \| f \nts - g^{}_{n} \|^{}_{\infty} \, < \, \varepsilon
\]
for any $n\geqslant n^{}_{0}$. The third term on the right-hand side of the
previous estimate is smaller than $\varepsilon$ for sufficiently large
$n$ as a consequence of Lemma~\ref{B-lemma:mean-approx}, while the
middle term, under our assumptions, is bounded by $\varepsilon$ for
sufficiently large $N$, which we are still free to choose. This $3\ts
\varepsilon$-argument thus establishes the claim.
\end{proof}

Let us mention in passing that Theorem~\ref{B-thm:Bohr} still 
holds if $\alpha^{n}$, as before, is replaced by the numbers
$u^{}_{n}$ of a sequence as described in 
Remark~\ref{B-rem:gen-u}.\smallskip

At this point, to go any further, we need to extend the class of
functions we consider. This is motivated by the fact that uniform
almost periodicity is often too restrictive. In particular, in various
examples from dynamical systems theory, one encounters 
averages over functions that fail to be bounded, and hence
cannot be uniformly almost periodic. Being unbounded, such
functions cannot be locally Riemann-integrable either, though
they might still admit improper Riemann integrals or
be locally Lebesgue{\ts}-integrable.

It would be natural to investigate the question in the setting of
weakly almost periodic functions, as introduced in \cite{B-NS},
which seems possible as well. However, the above remarks indicate that
one needs results also for functions that violate continuity.  This
suggests to use the wider class of almost periodic functions in the
sense of Stepanov\footnote{The widely used modern version of the name
  is V.V.~Stepanov, while the author used W.~Stepanoff in his original
  articles.}  \cite{B-Step}, which relate to locally
Lebesgue{\ts}-integrable functions like uniform almost periodic
functions do to continous functions. The new norm on
$L^{1}_{\mathrm{loc}} (\RR)$ is given by
\[
    \| f \|^{}_{\mathrm{S}} \, =  \, \sup_{x\in\RR} \myfrac{1}{L}
    \int_{x}^{x+L} \lvert f(y) \rvert \dd y \ts ,
\]
where $L>0$ is an arbitrary, but fixed number. Since these norms are
equivalent for different values of $L$, it is most convenient to
choose $L=1$, as we do from now on. Now, a locally
Lebesgue{\ts}-integrable function $f$ is called \emph{almost periodic
  in the sense of Stepanov}, or
S-almost periodic for short, if, for any $\varepsilon > 0$, the set
$\cP^{\mathrm{S}}_{\varepsilon}$ of $\varepsilon$-almost periods of
$f$ for $\|.\|^{}_{\mathrm{S}}$ is relatively dense. The analogue of
Fact~\ref{B-fact:ap-equiv} then reads as follows (we omit a proof
because it works the same way as in the previous case; 
compare \cite{B-Cord}).

\begin{fact}\label{B-fact:S-equiv}
For\/ $f\in L^{1}_{\mathrm{loc}}(\RR)$, the following properties 
are equivalent.
   \begin{enumerate}\itemsep=2pt
   \item $f$ is\/ $\mathrm{S}$-almost periodic, i.e.,
     $\cP^{\mathrm{S}}_{\varepsilon}$ is relatively dense for any\/
     $\varepsilon > 0$;
   \item $f$ is Bochner-almost periodic for\/ $\|.\|^{}_{\mathrm{S}}$,
     i.e., the orbit\/ $\{ T_{t} f : t\in\RR \}$ is precompact in the\/
     $\|.\|^{}_{\mathrm{S}}$-topology;
   \item $f$ is the\/ $\|.\|^{}_{\mathrm{S}}$-limit of a sequence of
     trigonometric polynomials.  \qed
   \end{enumerate}
\end{fact}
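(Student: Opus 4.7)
The plan is to mirror the classical proof of Fact~\ref{B-fact:ap-equiv}, systematically replacing the sup norm $\|.\|^{}_{\infty}$ by the Stepanov norm $\|.\|^{}_{\mathrm{S}}$ and the space $C(\RR)$ by $L^{1}_{\mathrm{loc}}(\RR)$. The structural observation that makes this transfer work is that translation $T^{}_{t}$ acts as an isometry on $\bigl(L^{1}_{\mathrm{loc}}(\RR), \|.\|^{}_{\mathrm{S}}\bigr)$, which is immediate from the definition of the Stepanov norm via a change of variables.

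For $(1) \Rightarrow (2)$, I would first establish that the orbit map $t \mapsto T^{}_{t} f$ is uniformly continuous from $\RR$ into $\bigl(L^{1}_{\mathrm{loc}}(\RR), \|.\|^{}_{\mathrm{S}}\bigr)$, which rests on the density of continuous compactly supported functions in $L^{1}_{\mathrm{loc}}(\RR)$ for $\|.\|^{}_{\mathrm{S}}$ together with the isometry just noted. Given $\varepsilon > 0$, fix an inclusion length $\ell$ for $\cP^{\mathrm{S}}_{\varepsilon/2}$; every $t \in \RR$ then decomposes as $t = \tau + s$ with $\tau \in \cP^{\mathrm{S}}_{\varepsilon/2}$ and $s \in [-\ell, \ell]$, so $T^{}_{t} f$ lies within $\varepsilon/2$ of the continuous image of $[-\ell, \ell]$ under $s \mapsto T^{}_{s} f$, which is $\|.\|^{}_{\mathrm{S}}$-compact. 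Total boundedness of that set yields a finite $\varepsilon$-net for the full orbit. The converse $(2) \Rightarrow (1)$ is a textbook argument: a finite $\varepsilon$-net $\{T^{}_{t^{}_{1}} f, \ldots , T^{}_{t^{}_{k}} f\}$ gives, for every $t \in \RR$, some index $j$ with $t - t^{}_{j} \in \cP^{\mathrm{S}}_{\varepsilon}$, and since the $t^{}_{j}$ lie in a bounded set, relative density of $\cP^{\mathrm{S}}_{\varepsilon}$ follows at once.

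The essential work is $(2) \Rightarrow (3)$. One defines Fourier--Bohr coefficients $a(k) = M\bigl( \ee^{-2 \pi \ii k (.)} f \bigr)$, where existence (uniformly in translation) of the mean for an S-almost periodic $f$ is proved directly, by an $\varepsilon$-almost period argument analogous to the Bohr case: S-almost periods control precisely the local $L^{1}$-averages that appear in \eqref{B-eq:mean-def}. One then shows that at most countably many $a(k)$ are nonzero and constructs Bochner--Fej\'{e}r trigonometric polynomials built from the $a(k)$ with rational convergence-enforcing multipliers, as in \eqref{B-eq:trig-approx}, which converge to $f$ in $\|.\|^{}_{\mathrm{S}}$. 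The additional ingredient compared to the uniform case is a convolution estimate of the form $\|K * f\|^{}_{\mathrm{S}} \leqslant \|K\|^{}_{L^{1}(\RR)} \|f\|^{}_{\mathrm{S}}$ for $K \in L^{1}(\RR)$, which lets the Bochner--Fej\'{e}r kernels act continuously on $f$ in the Stepanov norm. The implication $(3) \Rightarrow (1)$ is immediate: trigonometric polynomials are Bohr-almost periodic, hence S-almost periodic, while the set of S-almost periodic functions is $\|.\|^{}_{\mathrm{S}}$-closed by a routine three-$\varepsilon$ argument that uses the isometry of translation.

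The main obstacle is the Bochner--Fej\'{e}r step in $(2) \Rightarrow (3)$: verifying the convolution estimate in the Stepanov norm and controlling the convergence-enforcing factors over a potentially infinite frequency module requires the same care as in the uniform case, now with the weaker norm. Everything else transfers essentially verbatim once the isometry of translation for $\|.\|^{}_{\mathrm{S}}$ is in hand.
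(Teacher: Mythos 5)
Your outline is the classical Bochner/Bochner--Fej\'{e}r argument for the Stepanov class, which is precisely what the paper has in mind: it omits the proof altogether, stating that it ``works the same way as in the previous case'' and pointing to \cite{B-Cord}. So in substance you are reconstructing the intended (omitted) proof, and the overall architecture --- isometry of translation for $\|.\|^{}_{\mathrm{S}}$, the decomposition $t=\tau+s$ with $\tau$ an almost period and $s$ in a compact inclusion interval, the convolution bound $\|K*f\|^{}_{\mathrm{S}}\leqslant\|K\|^{}_{L^{1}}\ts\|f\|^{}_{\mathrm{S}}$ for the Bochner--Fej\'{e}r kernels, and the three-$\varepsilon$ closedness argument for $(3)\Rightarrow(1)$ --- is correct.

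One auxiliary claim in your $(1)\Rightarrow(2)$ step is false as stated: continuous compactly supported functions are \emph{not} dense in $L^{1}_{\mathrm{loc}}(\RR)$ for $\|.\|^{}_{\mathrm{S}}$. Indeed, for any $g$ with compact support and any nonzero periodic $f$, one has $\| f-g\|^{}_{\mathrm{S}}\geqslant \sup_{\lvert x\rvert \,\mathrm{large}}\int_{x}^{x+1}\lvert f(y)\rvert\dd y>0$, so the Stepanov norm sees the behaviour at infinity and no global density of $C_{c}(\RR)$ is available. What the continuity of the orbit map $t\mapsto T_{t}f$ actually rests on is the $L^{1}$-continuity of translation on a fixed compact interval (which does use density of continuous functions in $L^{1}$ of that interval), combined with the relative density of the almost periods to transport this local estimate to all of $\RR$ via $x=\tau+s$. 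Since you already employ exactly this decomposition for the precompactness of the orbit, the repair is immediate; but the density statement should not be left in its current global form.
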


Let us note in passing that every locally integrable function $f$ on
$\RR$ may be viewed as a translation bounded measure (where $f$ is the
Radon--Nikodym density relative to $\lambda$). In doing so, the
Stepanov norm is induced by the $\| . \|^{}_{[0,1]}$-norm for measures as
discussed in \cite{B-NS}. This implies that a function
$f\in L^{1}_{\mathrm{loc}}(\RR)$ is S-almost periodic if and only if
the measure $f\lambda$ is norm-almost periodic in the sense of
\cite{B-Crelle,B-NS}.

Every uniformly almost periodic function is S-almost periodic, which
also means (via part (3) of Fact~\ref{B-fact:S-equiv}) that any
S-almost periodic function can be $\|.\|^{}_{\mathrm{S}}$-approximated
by uniformly almost periodic functions. In other words, the class of
all S-almost periodic functions can equivalently be described as the
$\|.\|^{}_{\mathrm{S}}$-closure of the (complex) algebra of
trigonometric polynomials or as that of the class of uniformly almost
periodic functions. Moreover, the space of \mbox{S-almost} periodic 
functions is complete in the $\|.\|^{}_{\mathrm{S}}$-norm, and $\| f
\|^{}_{\mathrm{S}} =0$ means $f=0$ in the Lebesgue sense, so
$f (x) = 0$ for a.e.\ $x\in\RR$; see \cite{B-BB27} for details.

\begin{remark}
If $f$ is S-almost periodic, its mean exists. In fact,
observe that, for all S-almost periodic
functions $f, g$ and for any
$a\in\RR$, one has
\[
   \myfrac{1}{2 \ts T} \biggl| \int_{a-T}^{a+T} \!
   \bigl( f(x) - g(x) \bigr) \dd x \, \biggr| \; \leqslant \;
   \frac{1 + [2 \ts T]}{2 \ts T} \, \big\| f - g 
   \big\|_{\mathrm{S}} \ts .
\]
Now, it is immediate that the statement of
Lemma~\ref{B-lemma:mean-approx} still holds if uniform convergence is
replaced by $\|.\|^{}_{\mathrm{S}}$-convergence.  This then gives the
desired existence of means because, by
Fact~\ref{B-fact:S-equiv}{\ts}(3), we can
$\|.\|^{}_{\mathrm{S}}$-approximate any \mbox{S-almost} periodic
function with trigonometric polynomials for which the mean clearly
exists.  \exend
\end{remark}

As an aside, we mention the following interesting connection.

\begin{lemma}
  Let\/ $f$ be an\/ $\mathrm{S}$-almost periodic function, and let\/
  $\delta>0$ be arbitrary, but fixed. Then, the function\/
  $f^{}_{\delta}$ defined by
\[
      f^{}_{\delta} (x) \, = \, \myfrac{1}{2\ts \delta} 
     \int_{x-\delta}^{x+\delta} f(y) \dd y
\]   
is continuous and uniformly almost periodic. Moreover,
$\,\lim_{\delta\searrow 0} \, f^{}_{\delta} = f$ in the\/
$\| . \|^{}_{\mathrm{S}}$-topology.
\end{lemma}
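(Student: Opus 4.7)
The plan is to decompose the statement into three parts---continuity, uniform almost periodicity, and Stepanov-norm convergence---each resting on comparing the averaging operator $f \mapsto f^{}_{\delta}$ against different norms on $L^{1}_{\mathrm{loc}}(\RR)$. The key pointwise bound
\[
   \lvert f^{}_{\delta}(x) - g^{}_{\delta}(x) \rvert \,\leqslant\,
   \myfrac{1}{2\ts\delta}\int_{x-\delta}^{x+\delta}\lvert f(y) - g(y) \rvert \dd y
\]
converts local $L^{1}$-information about $f-g$ into uniform control of $f^{}_{\delta} - g^{}_{\delta}$. Continuity of $f^{}_{\delta}$ then follows from absolute continuity of the Lebesgue integral, while boundedness is immediate from the covering estimate $\int_{x-\delta}^{x+\delta}\lvert f(y)\rvert \dd y \leqslant (1+[2\ts\delta])\,\|f\|^{}_{\mathrm{S}}$. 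For uniform almost periodicity, applying the displayed bound with $g = T_{t}f$ yields $\|f^{}_{\delta} - T_{t} f^{}_{\delta}\|^{}_{\infty} \leqslant \frac{1+[2\ts\delta]}{2\ts\delta}\,\|f - T_{t} f\|^{}_{\mathrm{S}}$, so, for fixed $\delta$, every sufficiently small Stepanov almost period of $f$ is a Bohr almost period of $f^{}_{\delta}$, and relative density of $\cP^{\mathrm{S}}_{\varepsilon}$ transfers across.

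For the $\|.\|^{}_{\mathrm{S}}$-convergence $f^{}_{\delta} \to f$ as $\delta\searrow 0$, the plan is a three-epsilon argument based on approximation by trigonometric polynomials. The explicit calculation $(\ee^{2\pi\ii k(.)})^{}_{\delta} = \frac{\sin(2\pi k\delta)}{2\pi k\delta}\,\ee^{2\pi\ii k(.)}$ for $k\ne 0$ shows that $\|P^{}_{\delta} - P\|^{}_{\infty} \to 0$ for any trigonometric polynomial $P$, and hence also $\|P^{}_{\delta}-P\|^{}_{\mathrm{S}}\to 0$. Given $\varepsilon>0$, pick $P$ with $\|f-P\|^{}_{\mathrm{S}}<\varepsilon$ via Fact~\ref{B-fact:S-equiv}(3), and then bound
\[
   \|f^{}_{\delta}-f\|^{}_{\mathrm{S}} \,\leqslant\,
   \|f^{}_{\delta}-P^{}_{\delta}\|^{}_{\mathrm{S}} +
   \|P^{}_{\delta} - P\|^{}_{\mathrm{S}} + \|P-f\|^{}_{\mathrm{S}} ,
\]
choosing $\delta$ small enough to handle the middle term.

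The main obstacle is to bound the first summand by $C\,\|f-P\|^{}_{\mathrm{S}}$ with a constant $C$ that does not blow up as $\delta\searrow 0$. Swapping the order of integration in $\int_{x}^{x+1}\lvert (f-P)^{}_{\delta}(y)\rvert \dd y$ gives
\[
   \int_{x}^{x+1}\lvert (f-P)^{}_{\delta}(y)\rvert \dd y \,\leqslant\,
   \myfrac{1}{2\ts\delta}\int_{x-\delta}^{x+1+\delta}\lvert f(z)-P(z)\rvert
   \cdot\lambda\bigl([x,x+1]\cap [z-\delta,z+\delta]\bigr) \dd z ,
\]
and the inner length is at most $2\ts\delta$, which exactly cancels the prefactor. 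For $\delta\leqslant 1$, the resulting interval $[x-\delta, x+1+\delta]$ is covered by at most three unit intervals, so $C\leqslant 3$, independent of $\delta$, and the three-epsilon argument closes.
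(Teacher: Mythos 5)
Your proof is correct, and it is worth comparing with the paper's. For the first assertion you argue exactly as the paper does: the estimate $\|f^{}_{\delta}-T_{t}f^{}_{\delta}\|^{}_{\infty}\leqslant\frac{1+[2\ts\delta]}{2\ts\delta}\,\|f-T_{t}f\|^{}_{\mathrm{S}}$ transfers relatively dense sets of Stepanov almost periods into Bohr almost periods of $f^{}_{\delta}$ after rescaling $\varepsilon$. (Your separate derivation of continuity from absolute continuity of the integral is actually a small improvement: the paper deduces continuity \emph{after} invoking Fact~4.2, whose hypothesis is $f\in C(\RR)$, so establishing continuity first, as you do, removes a slight awkwardness.) The real difference is in the second assertion, $\lim_{\delta\searrow 0}f^{}_{\delta}=f$ in $\|.\|^{}_{\mathrm{S}}$: the paper does not prove this at all but defers to the original mollifier argument of Besicovitch and Bohr, whereas you give a self-contained three-epsilon proof. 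Its two ingredients are both sound: the explicit eigenvalue computation $(\ee^{2\pi\ii k(.)})^{}_{\delta}=\frac{\sin(2\pi k\delta)}{2\pi k\delta}\,\ee^{2\pi\ii k(.)}$ gives $\|P^{}_{\delta}-P\|^{}_{\mathrm{S}}\leqslant\|P^{}_{\delta}-P\|^{}_{\infty}\to 0$ for each fixed trigonometric polynomial, and the Fubini estimate shows the averaging operator is bounded on $(L^{1}_{\mathrm{loc}},\|.\|^{}_{\mathrm{S}})$ with norm at most $3$ uniformly for $\delta\leqslant 1$ (the factor $\lambda([x,x+1]\cap[z-\delta,z+\delta])\leqslant 2\ts\delta$ cancelling the $\frac{1}{2\delta}$ is exactly the right observation). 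Combined with Fact~4.7{\ts}(3) this closes the argument. What your route buys is independence from the external reference, at the modest cost of leaning on the nontrivial approximation theorem behind Fact~4.7{\ts}(3); the classical mollifier argument works directly with local integrability instead.
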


\begin{proof}
  Assume $\delta \leqslant \frac{1}{2}$ (the argument for
  $\delta > \frac{1}{2}$ is analogous), and let $t$ be a
  $(2 \ts \delta \ts \varepsilon)$-almost period of $f$ for
  $\|.\|^{}_{\mathrm{S}}$. Now,
\[
\begin{split}
  \bigl| f^{}_{\delta} (t+x) - f^{}_{\delta} (x) \bigr|
  \, & = \, \myfrac{1}{2 \delta} \, \biggl| 
  \int_{x-\delta}^{x+\delta} f (t+y) - f(y) \dd y \, \biggr| \\[2mm]
  \, & \leqslant \, \myfrac{1}{2 \delta}
  \int_{x-\delta}^{x-\delta+1} \bigl| f (t + y) - f (y)
  \bigr| \dd y  \, \leqslant \, 
  \myfrac{ \| f - T_{t} f \|^{}_{\mathrm{S}}}{2 \delta}
  \, < \, \varepsilon
\end{split}
\]
which implies that $t$ is an $\varepsilon$-almost period of
$f^{}_{\delta}$ for $\|.\|^{}_{\infty}$. Via part (1) of
Fact~\ref{B-fact:S-equiv}, we conclude that $f^{}_{\delta}$ satisfies
part (1) of Fact~\ref{B-fact:ap-equiv}, and thus is uniformly almost
periodic. As such, $f^{}_{\delta}$ is also uniformly continuous.

For the second claim, we refer to the original proof in \cite{B-BB27},
which uses an approximation argument that is based on the effect that
a `convolution mollifier' has on a locally \mbox{Lebesgue{\ts}}-integrable
function.
\end{proof}

The main extension of Theorem~\ref{B-thm:p-Sobol} 
can  be stated as follows.

\begin{theorem}\label{B-thm:ap-Sobol}
  Let\/ $\alpha\in\RR$ with\/ $\lvert \alpha \rvert >1$ be fixed, and
  let\/ $f \in L^{1}_{\mathrm{loc}} (\RR)$ be an\/ $\mathrm{S}$-almost
  periodic function.  Assume now
  that there is a uniformly discrete set\/ $\ts Y \! \subset \RR$
  such that\/ $f \!$, for every\/ $\delta > 0$, is locally
  Riemann-integrable on the complement of\/ $Y \nts\nts + (-\delta,\delta)$.
  Assume further that there is a\/ $\delta'>0$ such that, for any\/
  $z\in Y\!$, $f$ is differentiable on the punctured interval\/
  $(z-\delta', z+\delta') \setminus \{ z \}$ and that, for any\/
  $s>0$ and with\/ $V^{}_{\! N} (z,s)$ as defined in
  Theorem~$\ts\ref{B-thm:p-Sobol}$,
\[
    \sup_{z\in Y} V^{}_{\! N} (z, s) \, = \, \scO
    \bigl( N_{\vphantom{I}}^{\frac{s}{2} - \eta} \bigr)
\]
 holds for some\/ $\eta >0$ as\/ $N\to\infty$.

  Then, for a.e.\/ $x\in\RR$, one has
\[
     \lim_{N\to\infty} \myfrac{1}{N} \sum_{n=0}^{N-1} 
     f (\alpha^{n} x) \, = \, M (f) \ts ,
\]
where the mean exists because\/ $f$ is\/
$\mathrm{S}$-almost periodic.
\end{theorem}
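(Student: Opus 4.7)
The proof adapts the template of Theorem~\ref{B-thm:p-Sobol} in two essential ways. First, the finite set $F$ from the periodic case is replaced by the uniformly discrete, but possibly infinite, set $Y$, and Lemma~\ref{B-lem:Dio} is applied directly with this $Y$. Second, the Riemann-integrable ``regular'' part of $f$ is no longer periodic, and so must be handled using Theorem~\ref{B-thm:Bohr}, applied to a uniformly almost periodic approximation, in place of Fact~\ref{B-fact:per-sample}.

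Fix $\varepsilon > 0$ with $\vartheta := \eta - \varepsilon/2 > 0$. By Lemma~\ref{B-lem:Dio} applied to $Y$, for a.e.\ $x \in \RR$ one has $\dist (\alpha^{n-1} x, Y) \geqslant 1/n^{1+\varepsilon}$ for almost all $n \in \NN$, and by Fact~\ref{B-fact:discrep}, for a.e.\ $x$, $\cD^{}_{\nts N} = \cO\bigl(\bigl(\log(N)\bigr)^{\frac{3}{2}+\varepsilon}/\sqrt{N \ts}\,\bigr)$. Fix $\delta > 0$ small, with $\delta < \delta'$ and less than half the uniform-discreteness gap of $Y$, and consider the mollification $f^{}_{\delta}$ from the lemma immediately preceding this theorem. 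By that lemma, $f^{}_{\delta}$ is uniformly almost periodic and satisfies $\lim_{\delta\searrow 0} \| f - f^{}_{\delta} \|^{}_{\mathrm{S}} = 0$, from which the $\|\cdot\|^{}_{\mathrm{S}}$-version of Lemma~\ref{B-lemma:mean-approx} gives $M(f^{}_{\delta}) \to M(f)$ as $\delta \searrow 0$. Theorem~\ref{B-thm:Bohr} then yields
\[
    \myfrac{1}{N} \sum_{n=0}^{N-1} f^{}_{\delta} (\alpha^n x)
    \, \xrightarrow{\, N \to \infty \,} \, M ( f^{}_{\delta} )
\]
for a.e.\ $x\in\RR$.

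It remains to bound the Birkhoff average of $f - f^{}_{\delta}$, which is where the Sobol--Hartman argument from the proof of Theorem~\ref{B-thm:p-Sobol} is extended to the infinite set $Y$. For $x$ in the co-null set fixed above and $n$ sufficiently large, the Diophantine bound forces $\alpha^n x$ to stay away from $Y$ at rate $1/n^{1+\varepsilon}$, so whenever $\alpha^n x$ enters some $(z - \delta, z + \delta)$ with $z \in Y$, it in fact lies on the annulus $(z-\delta, z+\delta) \setminus (z - n^{-(1+\varepsilon)}, z+n^{-(1+\varepsilon)})$. The total variation of $f$ on any such annulus is dominated, uniformly in $z$, by $\sup_{z \in Y} V^{}_{\nts N}(z, 1+\varepsilon) = \scO (N^{(1+\varepsilon)/2 - \eta})$. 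A Koksma-type estimate, combined with the discrepancy bound, then yields
\[
    \cD^{}_{\nts N} \cdot \sup_{z \in Y} V^{}_{\nts N}(z, 1+\varepsilon)
    \, = \, \scO \biggl( \frac{\bigl(\log(N)\bigr)^{\frac{3}{2}+\varepsilon}}
    {N^{\vartheta}} \biggr) \, = \, \scO (1) ,
\]
in complete analogy with Eq.~\eqref{B-eq:bound}. Letting $N \to \infty$ for fixed $\delta$, the Birkhoff average of $f - f^{}_{\delta}$ then converges to $M(f - f^{}_{\delta})$, which in turn tends to $0$ as $\delta \searrow 0$. The principal obstacle is to adapt this Koksma-type estimate from the periodic setting, where one reduces modulo~$1$ and exploits finiteness of $F$, to the present setting in which no such reduction is available and $Y$ may be infinite: it is precisely the uniformity of the hypothesis $\sup_{z \in Y} V^{}_{\nts N}(z, s) = \scO(N^{s/2 - \eta})$, together with the uniform discreteness of $Y$, that allows contributions from all singular neighborhoods to be summed in an $\scO(1)$ manner. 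A secondary point is verifying, via differentiability of $f$ on $(z-\delta', z+\delta')\setminus \{z\}$, that $|f - f^{}_{\delta}|$ remains controlled at the sequence positions far from $Y$.
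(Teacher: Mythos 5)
Your overall architecture matches the paper's in its key ingredients: Lemma~\ref{B-lem:Dio} applied directly to the uniformly discrete set $Y$, the discrepancy bound of Fact~\ref{B-fact:discrep}, and the product estimate $\cD^{}_{\nts N}\cdot \sup_{z\in Y} V^{}_{\! N}(z,1+\varepsilon)=\scO(1)$ resting on the uniformity of the variation hypothesis. But your choice of decomposition is genuinely different and creates a gap. The paper splits $f = f^{(\delta)} + \bigl(f-f^{(\delta)}\bigr)$ with $f^{(\delta)} = f\cdot 1_{\delta}$, the \emph{truncation} of $f$ to the complement of $Y+(-\delta,\delta)$. The whole point of that choice is that the remainder $f-f^{(\delta)}$ is supported \emph{only} on $Y+(-\delta,\delta)$, so its Birkhoff average receives contributions only from those $n$ with $\alpha^{n}x$ near $Y$ --- precisely the contributions that the Diophantine bound, the variation hypothesis and the discrepancy estimate jointly control. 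You instead split off the mollification $f^{}_{\delta}$. That does buy something: $f^{}_{\delta}$ is honestly uniformly almost periodic, so Theorem~\ref{B-thm:Bohr} applies verbatim, whereas the truncation $f^{(\delta)}$ is discontinuous and the paper must invoke a separate (Weyl-type, ``combination of previous arguments'') step for it. But it costs you the localization of the remainder, and that cost is fatal as written.

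Concretely: $f-f^{}_{\delta}$ does not vanish off $Y+(-\delta,\delta)$; it is nonzero essentially everywhere. Away from $Y$ the hypotheses give only local Riemann integrability of $f$, not continuity, so $\lvert f-f^{}_{\delta}\rvert$ need not be pointwise small there (at a jump of $f$ it stays of the order of the jump for every $\delta>0$), and smallness of $\| f-f^{}_{\delta}\|^{}_{\mathrm{S}}$ does not control point evaluations $\sum_{n}(f-f^{}_{\delta})(\alpha^{n}x)$ --- a function can have tiny Stepanov norm yet take large values on a set visited by the orbit with positive frequency. Your Koksma/variation estimate only addresses the indices with $\alpha^{n}x\in Y+(-\delta,\delta)$ and says nothing about the remaining indices, so the assertion that the Birkhoff average of $f-f^{}_{\delta}$ converges to $M(f-f^{}_{\delta})$ is unjustified; establishing it is essentially as hard as the theorem itself. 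What you dismiss at the end as ``a secondary point'' is in fact the missing step. The repair is to use the truncation $f\cdot 1_{\delta}$ for the regular part (possibly in combination with your mollification argument to handle its mean and its Birkhoff average), reserving the variation/discrepancy machinery for the localized remainder $f\cdot(1-1_{\delta})$.
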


\begin{proof}[Sketch of proof]
  Without loss of generality, we may assume that $\delta'$ is small
  enough so that the open intervals $(z-\delta', z+\delta')$ with
  $z\in Y$ are disjoint. Now, Lemma~\ref{B-lem:Dio} guarantees that
  the sequence $(\alpha^{n-1} x)^{}_{n\in\NN}$, for a.e.\ $x\in\RR$,
  does not come closer to $Y$ than $1/n^{1+\varepsilon}$, for any
  fixed $\varepsilon > 0$ and then for all $n\in\NN$ except at most
  finitely many. 

  For any $z\in Y\!$, we have
  $ V^{}_{\! N} (z, 1+\varepsilon) = \scO \bigl(
  N^{\frac{1+\varepsilon}{2} - \eta} \bigr)$
  for some fixed $\eta > 0$ by assumption, where we may once again
  assume that $\varepsilon > 0$ is chosen such that
  $\vartheta = \eta - \frac{\varepsilon}{2} > 0$. With the estimate of
  Eq.~\eqref{B-eq:bound} in the proof of Theorem~\ref{B-thm:p-Sobol},
  we again obtain
  $\cD^{}_{\nts N} \cdot V^{}_{\! N} (z, 1+\varepsilon) = \scO (1)$ as
  $N\to\infty$, which establishes a sufficient criterion for the
  claimed convergence.

  Indeed, let $0<\delta<\delta'$ be arbitrary, and let $1_{\delta}$
  denote the characteristic function of the set
  $\RR \nts \setminus \nts \bigl( Y +
  (-\delta,\delta)\bigr)$.
  Obviously, for any such $\delta$, the function
  $f^{(\delta)} := f \cdot 1_{\delta}$ is both S-almost periodic and
  locally Riemann-integrable on $\RR$. For a.e.\ $x\in\RR$, we thus
  get
\[
   \myfrac{1}{N} \sum_{n=0}^{N-1}  f^{(\delta)}
   (\alpha^n x) \, \xrightarrow{\, N \to \infty \,} \,
   M \bigl( f^{(\delta)} \bigr)
   \, \xrightarrow{\, \delta \ts\ts \searrow \ts 0 \,} \,  M (f)
\]
by a combination of our previous arguments. Since the average of
$f - f^{(\delta)}$ along the exponential sequence is controlled by the
above mentioned estimate from Eq.~\eqref{B-eq:bound}, our claim
follows.
\end{proof}

Note that our assumption on $\eta$ achieves the analogue of
the comment made in Remark~\ref{B-rem:min-eta}. Note also
that Remark~\ref{B-rem:bad} has an obvious extension to this
more general situation. Indeed, one can once again replace
the differentiability condition by the corresponding behaviour
of the total variation in the vicinity of the `bad' points.

\section{Further directions and extensions}

Our exposition so far used complex-valued almost periodic functions
over $\RR$, mainly for ease of presentation. More generally, one is
interested in vector-valued functions, or in function with values in
an arbitrary Banach space $\XX$, with norm
$\boldsymbol{\lvert} . \boldsymbol{\rvert}$ say.  So, let
$f \! : \, \RR \xrightarrow{\quad} \XX$ be such a function, and define
$\| f \|^{}_{\infty} = \sup_{x\in\RR} \, \boldsymbol{\lvert} f(x)
\boldsymbol{\rvert}$.
Then, the $\varepsilon$-almost periods of $f$ are again defined as
\[
     \cP^{}_{\varepsilon} \, := \, \{ t \in \RR :
     \| f - T^{}_{t} f \|^{}_{\infty} < \varepsilon \} \ts ,
\]
with $\bigl( T^{}_{t} f \bigr) (x) = f (x-t)$ as before.

Likewise, one can define trigonometric
polynomials (or functions), by which
one now means any function $T \! : \, \RR \xrightarrow{\quad}
\XX$ of the form
\begin{equation}\label{B-eq:Banach-poly}
     Q_{m} (x) \, = \, a^{}_{0} \, +  \sum_{\ell = 1}^{m} 
     \ee^{2 \pi \ii k^{}_{\ell} x} \, a^{}_{\ell}
\end{equation}
for some $m\geqslant 0$, where $\{ k^{}_{1}, \ldots , k^{}_{\ell} \}$
are distinct, non-zero real numbers and where the $a^{}_{\ell}$ are 
now elements of $\XX$. When $m=0$, the sum is meant to be
empty and $Q_{m}$ is constant. The analogue of 
Fact~\ref{B-fact:ap-equiv} can now be 
stated as follows; see \cite[Ch.~VI]{B-Cord} for details.

\begin{fact}\label{B-fact:Banach}
   Let\/ $(\XX, \boldsymbol{\lvert} . \boldsymbol{\rvert})$ be a
   Banach space. Then,
   for a continuous function\/ $f\! : \, \RR \xrightarrow{\quad} \XX$, 
   the following properties are equivalent.
   \begin{enumerate}\itemsep=2pt
   \item $f$ is Bohr-almost periodic, i.e., $\cP_{\varepsilon}$ is
     relatively dense for any\/ $\varepsilon > 0$;
    \item $f$ is Bochner-almost periodic for\/ $\| . \|^{}_{\infty}$,
      i.e., the orbit\/ $\{ T_{t} f : t\in\RR \}$ is precompact in the\/
      $\|.\|^{}_{\infty}$-topology;
    \item $f$ is the limit of a sequence of trigonometric polynomials,
      with uniform convergence of the sequence on\/ $\RR$. \qed
   \end{enumerate}
\end{fact}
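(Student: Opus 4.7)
The plan is to establish the cycle $(1) \Rightarrow (2) \Rightarrow (1)$ and then $(1) \Leftrightarrow (3)$, by lifting the scalar arguments behind Fact~\ref{B-fact:ap-equiv} to the Banach-valued setting. Each step goes through once one observes that the scalar proofs use only the norm on $\CC$, which can be replaced by $\boldsymbol{\lvert} \cdot \boldsymbol{\rvert}$ throughout, together with the fact that $\bigl( C_{b}(\RR, \XX), \| \cdot \|^{}_{\infty} \bigr)$ is still complete whenever $\XX$ is.

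For $(1) \Rightarrow (2)$, I would first observe that any Bohr-almost periodic $f\! :\, \RR \xrightarrow{\quad} \XX$ is automatically bounded and uniformly continuous: the usual proof transfers directly, combining continuity on a compact interval with the relative density of $\cP^{}_{\!\varepsilon}$. Given $\varepsilon>0$, let $L$ be an inclusion length for $\cP^{}_{\!\varepsilon/2}$. The map $r \mapsto T^{}_{r} f$ from $[0,L]$ to $C_{b}(\RR, \XX)$ is continuous by uniform continuity of $f$, so its image is compact. Pick a finite\/ $\varepsilon/2$-net $T^{}_{r^{}_{1}} f, \ldots , T^{}_{r^{}_{k}} f$ within it. For arbitrary $t\in\RR$, one can write $t = p + r$ with $p\in\cP^{}_{\!\varepsilon/2}$ and $r\in[0,L]$, and then
\[
  \big\| T^{}_{t} f - T^{}_{r^{}_{i}} f \big\|^{}_{\infty} \, \leqslant \,
  \big\| T^{}_{p} f - f \big\|^{}_{\infty} + \big\| T^{}_{r} f - T^{}_{r^{}_{i}} f \big\|^{}_{\infty} \, < \, \varepsilon
\]
for a suitable index $i$, using the translation invariance of $\| . \|^{}_{\infty}$. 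Hence the orbit is totally bounded, and the completeness of $\XX$ yields compactness of its closure. The reverse implication $(2) \Rightarrow (1)$ is equally routine: covering the orbit by finitely many $\varepsilon/2$-balls $B(T^{}_{t^{}_{i}} f, \varepsilon/2)$ forces, for every $s\in\RR$, some $s - t^{}_{i}$ to satisfy $\| T^{}_{s - t^{}_{i}} f - f \|^{}_{\infty} < \varepsilon$, whence $s - t^{}_{i} \in \cP^{}_{\!\varepsilon}$ and $\cP^{}_{\!\varepsilon}$ is relatively dense with inclusion length controlled by $\max_{i} t^{}_{i} - \min_{i} t^{}_{i}$.

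The equivalence with $(3)$ likewise splits. The direction $(3) \Rightarrow (1)$ is immediate from a $3\ts\varepsilon$-argument (any $\varepsilon/3$-almost period of a uniform $\varepsilon/3$-approximant is an $\varepsilon$-almost period of $f$), provided each individual trigonometric polynomial $Q^{}_{m}$ from Eq.~\eqref{B-eq:Banach-poly} is Bohr-almost periodic. The latter follows from the already established $(2) \Rightarrow (1)$, since the orbit of $Q^{}_{m}$ is the image of the finite-dimensional torus parameterising the phases $(k^{}_{1} t, \ldots , k^{}_{m} t) \bmod 1$ under a continuous map to $C_{b}(\RR, \XX)$, hence is precompact. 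The main obstacle is $(1) \Rightarrow (3)$, which requires the full Banach-valued Bochner--Fej\'er machinery. The strategy is to (i) establish existence and $a$-independence of the $\XX$-valued mean $M \bigl( \ee^{-2\pi\ii k (\cdot)} f \bigr) \in \XX$ by the same relative-density argument as in the scalar case (uniform continuity and almost-periodicity control the error between shifted averages), (ii) show that only countably many $k$ yield nonzero Fourier--Bohr coefficients via a Bessel-type inequality in $\XX$, and (iii) construct Bochner--Fej\'er approximants with the same rational convergence-enforcing factors $r^{(m)}_{\ell}$ as in Eq.~\eqref{B-eq:trig-approx}, and verify uniform convergence. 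The crucial point is that these factors depend only on the frequencies $\{k^{}_{\ell}\}$, not on the coefficients, so the scalar estimates transfer verbatim to the Banach-valued setting; this is precisely the programme carried out in detail in \cite[Ch.~VI]{B-Cord}.
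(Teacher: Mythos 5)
The paper does not actually prove this Fact: it is stated with only a pointer to \cite[Ch.~VI]{B-Cord}, so there is no in-paper argument to compare against line by line. Your outline reconstructs the standard development from that reference, and the parts you argue in full are correct: $(1)\Rightarrow(2)\Rightarrow(1)$ via total boundedness of the orbit (using that Bohr-almost periodicity forces boundedness and uniform continuity, and that $C_{b}(\RR,\XX)$ inherits completeness from $\XX$), and $(3)\Rightarrow(1)$ via precompactness of the orbit of a trigonometric polynomial as a continuous image of a finite-dimensional torus, combined with the $3\ts\varepsilon$-stability of almost periods under uniform approximation.

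One concrete caveat on the hard direction $(1)\Rightarrow(3)$: step (ii) of your programme, countability of the spectrum ``via a Bessel-type inequality in $\XX$'', does \emph{not} transfer verbatim, because a general Banach space carries no inner product and hence no Bessel inequality. The standard repair is to note that the range of a Bohr-almost periodic $f$ is precompact in $\XX$ (it is the image of the precompact orbit under the continuous evaluation map at $0$), hence contained in a separable closed subspace $\XX_{0}$ that also contains every coefficient $a(k)$; choosing a countable norming family of functionals $x^{*}_{n}$ on $\XX_{0}$ and applying the scalar theory to each $x^{*}_{n}\!\circ f$ shows that the spectrum of $f$ lies in the countable union of the scalar spectra. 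With that amendment, step (iii) does go through exactly as you say, since the Bochner--Fej\'er kernels and the convergence-enforcing factors are scalar and depend only on the frequencies, not on the $\XX$-valued coefficients. So the proposal is essentially correct and supplies more detail than the paper itself, with this one point needing the separability detour rather than a literal transfer of the scalar estimate.
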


There is no surprise up to this point, and we have gained rather
little.  To continue, we need the notion of the
\emph{mean} of such a function $f$, and also some
generalisation of the Fourier series expansions. For this, we have to
be able to (locally) integrate the function $f$. A natural approach is
provided by \emph{Bochner's integral}
\cite{B-Bochner}, which can be viewed as an extension of the Lebesgue
integral to functions with values in a general Banach space; see
\cite[App.~E]{B-Cohn} as well as \cite[Sec.~V.5]{B-Yoshida} for modern
expositions.

With this extension, most of our previous results remain true, with
the only change that the coefficients $a^{}_{\ell}$ are now elements
of $\XX$ rather than complex numbers. For instance, one has $M (Q_{m})
= a^{}_{0}$ for the trigonometric polynomial of
Eq.~\eqref{B-eq:Banach-poly}, and the analogue of
Proposition~\ref{B-prop:mean-conv} holds without change.  Now, also
the consecutive steps have their natural analogues, and we obtain the
following result.

\begin{theorem}\label{B-thm:Banach}
  Let\/ $(\XX, \boldsymbol{\lvert} . \boldsymbol{\rvert})$ be a Banach
  space, and let\/ $f \! : \, \RR \xrightarrow{\quad} \XX$ be 
  Bohr-almost periodic. Then, for any fixed\/ $\alpha\in\RR$ with\/
  $\lvert \alpha \rvert > 1$, one has
\[
    \lim_{N\to\infty} \myfrac{1}{N} \sum_{n=0}^{N-1} 
     f (\alpha^{n} x) \, = \, M (f) \ts ,
\]
which holds for a.e.\ $x\in\RR$.  \qed
\end{theorem}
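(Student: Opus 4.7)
The plan is to mirror the proof of Theorem~\ref{B-thm:Bohr} verbatim, with the Banach norm $\boldsymbol{\lvert}\ts\cdot\ts\boldsymbol{\rvert}$ replacing the complex modulus and Bochner's integral replacing the Lebesgue integral. The work consists in verifying that the three supporting ingredients---uniform trigonometric polynomial approximation, a.e.\ convergence of Birkhoff averages for individual polynomials, and continuity of the mean under uniform limits---all carry over to the $\XX$-valued setting.

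First, Fact~\ref{B-fact:Banach} furnishes a sequence $(Q^{}_{n})^{}_{n\in\NN}$ of $\XX$-valued trigonometric polynomials as in Eq.~\eqref{B-eq:Banach-poly} with $\| f - Q^{}_{n} \|^{}_{\infty} \to 0$.  By the Bochner--Fej\'{e}r construction, cf.~\cite{B-Cord}, the underlying frequency sets may be arranged to be nested, exactly as in the proof of Theorem~\ref{B-thm:Bohr}.

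Second, I need the $\XX$-valued analogue of Proposition~\ref{B-prop:mean-conv}.  For any non-zero frequency $k\in\RR$ and any $a\in\XX$, the Birkhoff average of the monomial $\ee^{2\pi\ii k\ts(\cdot)}\ts a$ along $(\alpha^{n} x)^{}_{n\in\Nnull}$ factors as $\bigl(\frac{1}{N}\sum_{n=0}^{N-1}\ee^{2\pi\ii k \alpha^{n} x}\bigr)\ts a$, and Example~\ref{B-ex:trig-mono} yields a.e.\ convergence of the scalar factor to $0$, hence of the product to $0\in\XX$.  Summing the finitely many monomials of $Q^{}_{n}$ shows that, for every $n\in\NN$, there is a null set $E^{}_{n}$ outside of which $S^{}_{\nts N}(Q^{}_{n}, x) := \frac{1}{N}\sum_{\ell=0}^{N-1} Q^{}_{n}(\alpha^{\ell} x) \to M(Q^{}_{n})$.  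Set $E := \bigcup^{}_{n\in\NN} E^{}_{n}$, still a null set.

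Third, Lemma~\ref{B-lemma:mean-approx} extends verbatim to $\XX$-valued functions, since its proof uses only the triangle inequality together with the standard Bochner estimate $\boldsymbol{\lvert}\int h\dd\lambda\boldsymbol{\rvert}\leqslant \int\boldsymbol{\lvert} h\boldsymbol{\rvert}\dd\lambda$; this yields the existence of $M(f)$ and $\lim_{n\to\infty} M(Q^{}_{n}) = M(f)$.  The $3\ts\varepsilon$-argument of Theorem~\ref{B-thm:Bohr} then works without change: given $x\in\RR\setminus E$ and $\varepsilon>0$, choose $n$ with $\| f - Q^{}_{n}\|^{}_{\infty} < \varepsilon$ and $\boldsymbol{\lvert} M(Q^{}_{n}) - M(f)\boldsymbol{\rvert} < \varepsilon$, and then use step two to ensure $\boldsymbol{\lvert} S^{}_{\nts N}(Q^{}_{n}, x) - M(Q^{}_{n})\boldsymbol{\rvert} < \varepsilon$ for all sufficiently large $N$; the triangle inequality, with $\boldsymbol{\lvert} S^{}_{\nts N}(f - Q^{}_{n}, x)\boldsymbol{\rvert}$ uniformly controlled by $\| f - Q^{}_{n}\|^{}_{\infty}$, gives $\boldsymbol{\lvert} S^{}_{\nts N}(f,x) - M(f)\boldsymbol{\rvert} < 3\ts\varepsilon$.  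The only genuine obstacle is the bookkeeping needed to set up the Bochner integration framework and to check that each of the supporting lemmas transfers; no new analytic idea beyond Theorem~\ref{B-thm:Bohr} is required.
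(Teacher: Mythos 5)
Your proposal is correct and coincides with the paper's own treatment: the paper likewise obtains Theorem~\ref{B-thm:Banach} by transporting the proof of Theorem~\ref{B-thm:Bohr} to the $\XX$-valued setting via Fact~\ref{B-fact:Banach}, the factorisation of each monomial average into a scalar Weyl average times a fixed vector, and the Bochner-integral version of Lemma~\ref{B-lemma:mean-approx}. No further comment is needed.
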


The extension to almost periodic functions in the Stepanov sense works
in complete analogy, and we leave further steps in this direction to
the reader.

\section*{Acknowledgements}

MB would like to thank Jean-Pierre Conze, Michael Coons, Uwe Grimm and
Nicolae Strungaru for discussions and helpful comments. Financial
support by the German Research Council (DFG) through CRC 701 is
gratefully acknowledged.

\end{document}